\documentclass[11pt,a4paper]{article}
\usepackage[utf8]{inputenc}
\usepackage{amsmath,amssymb,amsthm}
\usepackage{geometry}
\usepackage{hyperref}
\usepackage{booktabs}
\usepackage{microtype}
\usepackage{fancyhdr}
\usepackage{enumitem}
\usepackage{times}
\usepackage{authblk}

\hypersetup{
    colorlinks=true,
    linkcolor=blue,
    citecolor=blue,
    filecolor=magenta,      
    urlcolor=cyan,
    pdftitle={Existence of Admissible Subsolutions to the Dirichlet Problem for Symmetric Augmented k-Hessian Type Equations in Bounded Domains},
}

\newtheorem{theorem}{Theorem}[section]
\newtheorem{lemma}[theorem]{Lemma}
\newtheorem{proposition}[theorem]{Proposition}

\theoremstyle{definition}
\newtheorem{definition}[theorem]{Definition}

\newtheorem{example}[theorem]{Example}

\numberwithin{equation}{section}

\fancypagestyle{firstpage}{
  \fancyhf{}
  \fancyfoot[L]{\itshape Preprint}
  \fancyfoot[R]{\today}

}

\title{\textbf{Existence of Admissible Subsolutions to the Dirichlet Problem for Symmetric Augmented $k$-Hessian Type Equations in Bounded Domains}\footnote{This research is funded by Hanoi Pedagogical University 2 under grant number HPU2.2022-UT-08}}

\author[1]{Dinh Hong Quang}
\author[2]{Tran Van Bang\thanks{Corresponding author. E-mail: tranvanbang@hpu2.edu.vn}}
\author[3]{Ha Tien Ngoan}
\author[4]{Nguyen Huu Tho}
\author[5]{Phan Trong Tien}

\affil[1]{PhD student at Faculty of Mathematics, Hanoi Pedagogical University 2, Phutho, Vietnam}
\affil[2]{Faculty of Mathematics, Hanoi Pedagogical University 2, Phutho, Vietnam}
\affil[3]{Faculty of Applied Science, University of Economics - Technology for Industries, Hanoi, Vietnam}
\affil[4]{Adjunct Lecturer, Faculty of Mathematics and Infomatics, Hanoi University of Science and Technology, Hanoi, Vietnam}
\affil[5]{Department of Mathematics, Quang Binh University,  Quangbinh, Vietnam}

\date{}

\begin{document}

\maketitle
\thispagestyle{firstpage}

\begin{abstract}
We prove the existence of admissible subsolutions to the Dirichlet problem for symmetric augmented $k$-Hessian type equations. An important sufficient condition is the uniform $(k-1)$-$A$-convexity of the domain $\Omega,$  where $A(x, z, p)$ is the augmented symmetric matrix appearing in the equation. This condition was originally introduced by F. Jiang, N. S. Trudinger, and X.-P. Yang and we have chosen a special their case. The structural conditions on the matrix $A(x, z, p)$ include its growth with respect to the variables $z$ and $p,$  particularly requiring that some of its first and second derivatives are sufficiently small in a sufficiently small neighborhood of the boundary. Under certain structural conditions on $A(x, z, p),$  the uniform $(k-1)$-$A$-convexity of $\Omega$ is also a necessary condition for the existence of admissible subsolutions of the equation in a neighborhood of the boundary. Our results extend the classic result by L. Caffarelli, L. Nirenberg, and J. Spruck from the case $A \equiv 0$ to the general case $A \neq 0.$ Our same theorems are valid also for augmented quotient Hessian type equations.
\vskip 12pt
\noindent \textbf{Keywords:} Symmetric augmented $k$-Hessian equation, Dirichlet problem, Admissible function, Admissible subsolution, Uniformly $(k-1)$-$A$-convex.
\end{abstract}

\section{Introduction}

This paper studies the existence of admissible subsolutions to the Dirichlet problem for the following symmetric augmented $k$-Hessian type equation:
\begin{equation}\label{eq1.1}
S_k( D^2 u - A(x, u, Du) ) = f(x, u, Du) \quad \text{in } \Omega,
\end{equation}
\begin{equation}\label{eq1.2}
u(x) = \phi(x) \quad \text{on } \partial\Omega,
\end{equation}
where $\Omega$ is a bounded domain in $\mathbb{R}^n$ with connected boundary $\partial\Omega \in C^2,$  $A(x, z, p) = [A_{ij}(x, z, p)]_{n \times n} \in \mathbb{R}^{n \times n}$ is a symmetric matrix ($A^T = A$), $f(x, z, p)$ is a positive scalar-valued function, and $\phi(x)$ is a given function on $\partial\Omega.$ 

The operator $S_k$ is defined by
\[
S_k(M) = \sigma_k(\lambda(M)),
\]
where $M$ is an $n \times n$ symmetric matrix, $\lambda(M) = (\lambda_1, \dots, \lambda_n) \in \mathbb{R}^n$ is the vector of eigenvalues of $M,$  and $\sigma_k(\lambda)$ is the $k$-th elementary symmetric polynomial:
\[
\sigma_k(\lambda) = \sum_{1 \leq i_1 < \dots < i_k \leq n} \lambda_{i_1} \dots \lambda_{i_k}.
\]
We denote by $\widetilde{\Gamma}_k$ the open cone
\[
\widetilde{\Gamma}_k = \{ \lambda \in \mathbb{R}^n: \sigma_k(\lambda) > 0\},
\]
and by $\Gamma_k$ its connected component, which contains the vector $(1, 1, \dots, 1).$ Note that $\Gamma_k\subset\Gamma_m, 1\leq m<k,$ and
\[
\Gamma_k = \{ \lambda \in \mathbb{R}^n: \sigma_j(\lambda) > 0,\quad 1\leq j\leq k\}.
\]

For $u(x) \in C^2(\bar{\Omega}),$  we denote:
\begin{equation}\label{eq1.3}
\omega(u)(x) = D^2 u(x) - A(x, u(x), Du(x)).
\end{equation}
A function $u(x)$ is said to be admissible for equation \eqref{eq1.1} in $\Omega' \subset \Omega$ if:
\[
\lambda(\omega(u)(x)) \in \Gamma_k \quad \text{for all } x \in \Omega'.
\]
If, in addition, $u(x)$ is a solution (subsolution) of equation \eqref{eq1.1} in $\Omega',$  then $u(x)$ is called an admissible solution (admissible subsolution) of equation \eqref{eq1.1} in $\Omega'.$  If $\Omega' = \Omega$ and $u(x) = \phi(x) $ on $\partial\Omega,$  then $u(x)$ is called an admissible solution (admissible subsolution) of the problem \eqref{eq1.1}, \eqref{eq1.2}.

When $A(x, z, p) \equiv 0,$  equation \eqref{eq1.1} is the standard $k$-Hessian equation; when $A(x, z, p) \neq 0,$  \eqref{eq1.1} is an augmented $k$-Hessian type equation; and when $k = n,$  \eqref{eq1.1} becomes the augmented Monge-Ampère equation. Currently, there are many papers studying the solvability of the problem \eqref{eq1.1}, \eqref{eq1.2}, such as \cite{3, 8, 7} and others, even for the class of augmented nonsymmetric $k$-Hessian type equations \cite{2}. To obtain a priori estimates on the solution in $C^1$ and $C^2,$  the authors of the aforementioned papers all make the same assumption, namely, the existence of an admissible subsolution to problem \eqref{eq1.1}, \eqref{eq1.2}. This paper of ours aims to introduce a class of matrices $A(x, z, p)$ to provide an affirmative answer regarding the existence of such admissible subsolutions.

In the case $A(x, z, p) \equiv 0,$  in the seminal 1985 paper, L. Caffarelli, L. Nirenberg, and J. Spruck \cite{3} proved the existence of an admissible subsolution of the problem \eqref{eq1.1}, \eqref{eq1.2} under the assumption that the boundary $\partial\Omega$ is uniformly $(k-1)$-convex. They proved the existence of admissible subsolutions even for a wider class of equations where the polynomial $\sigma_k(\lambda)$ is replaced by any symmetric hyperbolic polynomial satisfying an additional boundary condition on $\partial\Omega,$  where $\partial\Omega$ is uniformly $(k-1)$-convex in the sense associated with this polynomial.

In this paper, we apply the techniques from \cite{3} to extend the results to the class of equations \eqref{eq1.1} with $A(x, z, p) \neq 0,$  where the uniform $(k-1)$-convexity condition of the domain $\Omega$ is replaced by the uniform $(k-1)$-$A$-convexity of $\Omega.$  The uniform $(k-1)$-$A$-convexity of the domain $\Omega$ was proposed by F. Jiang, N. S. Trudinger, and X.-P. Yang in \cite{7, 8} during their a priori second derivative estimates of admissible solutions. However, the concept of uniform $(k-1)$-$A$-convexity introduced by those authors is not an independent geometric property of the domain $\Omega$ itself, but rather depends on some non-fixed function $u(x)\in C^2(\overline{\Omega}).$ Below we have chosen a special their case by setting $u(x)\equiv 0.$

Suppose equation \eqref{eq1.1} is written in the orthonormal coordinate system $O e_1 \dots e_n.$  Denote by $\nu(x_0) = (\nu_1(x_0), \dots, \nu_n(x_0))$ the unit outward normal vector at $x_0 \in \partial\Omega.$  Suppose $e'_1, \dots, e'_n$ is an orthonormal basis such that $e'_n = -\nu(x_0),$  and the new origin $O'$ is shifted to $x_0.$  Let $J(x_0) = [J_{ij}(x_0)]_{n \times n}$ be the transformation matrix from $e_1, \dots, e_n$ to $e'_1, \dots, e'_n,$  and let $\tilde{J}(x_0) = [J_{ij}(x_0)]_{n \times (n-1)}$ be its submatrix.

In the coordinate system $O' e'_1 \dots e'_n,$  the boundary $\partial\Omega$ is locally represented by
\[
y_n = \rho(y'), \quad y' = (y_1, \dots, y_{n-1}).
\]
Let $\kappa_1(x_0), \dots, \kappa_{n-1}(x_0)$ be the principal curvatures of $\partial\Omega$ at $x_0 \in \partial\Omega,$  i.e.,
\[
\lambda(D^2_{y'} \rho(y'))\Big|_{y'=0} = (\kappa_1(x_0), \dots, \kappa_{n-1}(x_0))
\]
(with respect to the inner normal vector $-\nu(x_0)$).
Recall that the domain $\Omega$ is called uniformly $(k-1)$-convex if
\[
(\kappa_1(x_0), \dots, \kappa_{n-1}(x_0)) \in \Gamma_{k-1} \subset \mathbb{R}^{n-1} \quad \text{for all } x_0 \in \partial\Omega,
\]
meaning
\[
\inf_{x_0 \in \partial\Omega} \sigma_{k-1}(\kappa_1(x_0), \dots, \kappa_{n-1}(x_0)) > 0.
\]
We denote:
\begin{equation}\label{eq1.4}
M(x_0, A):= \operatorname{diag}(\kappa_1(x_0), \dots, \kappa_{n-1}(x_0)) + \tilde{J}(x_0)^T \left[ \sum_{j=1}^n D_{p_j} A(x_0, 0, 0) \nu_j(x_0) \right] \tilde{J}(x_0).
\end{equation}

\begin{definition}\label{def1.1}
The domain $\Omega$ is called uniformly $(k-1)$-$A$-convex if there exists a constant $C_1 > 0$ such that
\begin{equation}\label{eq1.5}
\inf_{x_0 \in \partial\Omega} S_{k-1}(M(x_0, A)) \geq C_1,
\end{equation}
which is equivalent to $\lambda(M(x_0, A)) \in \Gamma_{k-1} \subset \mathbb{R}^{n-1}$ for all $x_0 \in \partial\Omega.$ 
\end{definition}

One of the main results of this paper is the following theorem on the necessary condition.

\begin{theorem}\label{thm1.1}
Suppose there exists an admissible function $v(x) \in C^2(\bar{\Omega}_{\delta})$ of equation \eqref{eq1.1} in a neighborhood $\Omega_{\delta}$ of the boundary $\partial\Omega,$  where
\[
\Omega_{\delta} = \{ x \in \Omega: d(x) < \delta \}, \quad 0 < \delta \ll 1,
\]
with $d(x) = \operatorname{dist}(x, \partial\Omega)$ denoting the distance from $x$ to $\partial\Omega.$  Suppose that the following conditions are satisfied:
\begin{enumerate}
\item $v(x_0) = 0$ for all $x_0 \in \partial\Omega, v(x)<0$ for all $x\in\Omega_{\delta};$ 
\item $\inf_{x_0 \in \partial\Omega} |Dv(x_0)| > 0;$ 
\item $A(x_0, 0, 0) \geq 0;$ 
\item $\sup_{x_0 \in \partial\Omega} \left[ |Dv(x_0)|^2 \sup_{|p| \leq |Dv(x_0)|} \left[ \sum_{i,j=1}^n |D^2_{p_i p_j} A(x_0, 0, p)|^2 \right]^{1/2} \right] \leq \frac{1}{2n} \inf_{x_0 \in \partial\Omega} [ S_k(\omega(v)(x_0)) \cdot t_k(\omega(v)(x_0)) ],$  \label{eq1.7}
\end{enumerate}
where $\omega(v)(x)$ is defined by \eqref{eq1.3}, and $t_k(\omega(v)(x_0))$ is defined as the (unique) positive solution of the equation \eqref{eq2.19}.
Then $\Omega$ is uniformly $(k-1)$-$A$-convex.
\end{theorem}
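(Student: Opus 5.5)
Fix $x_0\in\partial\Omega$ and work in the coordinates $O'e'_1\dots e'_n$ of the introduction, with $x_0$ at the origin, $e'_n=-\nu(x_0)$, and $\partial\Omega$ locally given by $y_n=\rho(y')$ with $\rho(0)=0$, $D\rho(0)=0$. Since $v=0$ on $\partial\Omega$ and $v<0$ in $\Omega_\delta$, the tangential gradient of $v$ vanishes at $x_0$ and $Dv(x_0)=-|Dv(x_0)|\,e'_n=|Dv(x_0)|\,\nu(x_0)$. Write $\mu:=|Dv(x_0)|$; by condition 2, $\mu\ge\mu_0>0$ uniformly in $x_0$. Differentiating $v(y',\rho(y'))\equiv 0$ twice at $y'=0$ gives the standard identity
\[
v_{\alpha\beta}(x_0)=-v_n(x_0)\rho_{\alpha\beta}(0)=\mu\,\kappa\text{-block},\qquad 1\le\alpha,\beta\le n-1 .
\]
Hence the tangential $(n-1)\times(n-1)$ block of $D^2v(x_0)$ in the new frame is $\mu\,\mathrm{diag}(\kappa_1,\dots,\kappa_{n-1})$ after rotating $y'$ to principal directions.

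Next I use admissibility. Let $W=\omega(v)(x_0)=D^2v(x_0)-A(x_0,0,\mu\nu)$, expressed in the new frame. Since $\lambda(W)\in\Gamma_k$, a classical fact (used in \cite{3}) says that the principal $(n-1)\times(n-1)$ submatrix $\widetilde W$ satisfies $\lambda(\widetilde W)\in\Gamma_{k-1}$. More quantitatively, I expect \eqref{eq2.19} and $t_k$ in Section 2 to provide a bound of the form $S_{k-1}(\widetilde W)\ge c(n,k)\,S_k(W)\,t_k(W)>0$, or an equivalent statement that $\widetilde W - t\,I\in\Gamma_{k-1}$ for $t$ up to a quantity controlled by $t_k(W)$. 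I will invoke that lemma as is. Now I Taylor expand $A$ in $p$ about $p=0$:
\[
A(x_0,0,\mu\nu)=A(x_0,0,0)+\mu\sum_j D_{p_j}A(x_0,0,0)\nu_j+R,\qquad \|R\|\le \tfrac12\mu^2\sup_{|p|\le\mu}\Big[\sum_{i,j}|D^2_{p_ip_j}A(x_0,0,p)|^2\Big]^{1/2}.
\]
Restricting to tangential directions via $\tilde J$ gives
\[
\widetilde W=\mu\Big(\mathrm{diag}(\kappa)-\tilde J^T\Big[\textstyle\sum_j D_{p_j}A\,\nu_j\Big]\tilde J\Big)-\tilde J^TA(x_0,0,0)\tilde J-\tilde J^TR\tilde J .
\]
The sign of the linear term needs care. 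Because the normal is $e'_n=-\nu$, the sign convention in \eqref{eq1.4} is presumably arranged so that this combination is exactly $\mu M(x_0,A)$. I will verify this against the paper's convention for $J$.

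Then I conclude with $\mu M(x_0,A)=\widetilde W+\tilde J^TA(x_0,0,0)\tilde J+\tilde J^TR\tilde J$. By condition 3 the middle term is positive semidefinite. Adding a positive semidefinite matrix preserves membership in $\Gamma_{k-1}$ and does not decrease $S_{k-1}$, because $\Gamma_{k-1}+\overline{\Gamma_n}\subset\Gamma_{k-1}$ and $\sigma_{k-1}$ is monotone there. For the remainder, condition 4 gives $\|R\|\le \frac{1}{4n}\inf S_k t_k$. This should be small enough that $\widetilde W-\|R\|I$ stays in $\Gamma_{k-1}$ with $S_{k-1}$ bounded below by a fixed positive constant independent of $x_0$, via the lemma of the second step. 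Dividing by $\mu^{k-1}$ and using that $\mu\le\sup|Dv|<\infty$ by $C^2(\bar\Omega_\delta)$ then yields \eqref{eq1.5} with a uniform $C_1$.

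The main obstacle is the quantitative step: matching the exact role of $t_k$ in \eqref{eq2.19} with the perturbation $\|R\|$ so that the factor $\frac1{2n}$ suffices. It is also what secures a uniform lower bound rather than mere positivity. First I would try to use concavity of $S_{k-1}^{1/(k-1)}$ on $\Gamma_{k-1}$ together with the Hilbert–Schmidt norm bound on $R$, which is how the $\frac{1}{2n}$ and the Frobenius norm in condition 4 most likely enter.
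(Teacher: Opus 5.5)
Your proposal has a genuine gap at the step you flag as the main obstacle, and the gap comes from the order in which you do things.

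You restrict to the tangential $(n-1)\times(n-1)$ block $\widetilde W$ first. You then need $\widetilde W+\tilde J^TR\tilde J$ to stay in $\Gamma_{k-1}$, and for that you ``invoke as is'' a lemma bounding $S_{k-1}(\widetilde W)$, or the $\Gamma_{k-1}$-margin of $\widetilde W$, in terms of $S_k(W)t_k(W)$. No such lemma is in Section~2. Proposition~\ref{prop2.3}(3) is a perturbation statement for an $n\times n$ matrix in $\Gamma_k$. The analogous statement for the block in $\Gamma_{k-1}$ would have its threshold expressed through $S_{k-1}(\widetilde W)$ and $S_1(\widetilde W)$, which condition~4 does not control. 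Knowing only that $S_{k-1}(\widetilde W)>0$ says nothing about how large a perturbation is tolerated. Your fallback does not close the gap either. Concavity of $S_{k-1}^{1/(k-1)}$ gives only upper bounds $f(\widetilde W+E)\le f(\widetilde W)+Df(\widetilde W)[E]$, and it presupposes $\widetilde W+E\in\Gamma_{k-1}$, which is exactly what has to be shown.

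The missing idea is to perturb at the full $n\times n$ level before restricting, as the paper does. Set $\omega_2(x_0)=D^2v(x_0)-\sum_jD_{p_j}A(x_0,0,0)D_{x_j}v(x_0)=W+A(x_0,0,0)+R$. Then:
\begin{itemize}
\item Proposition~\ref{prop2.3}(3) with $\omega=W$, $\alpha=R$ gives $\lambda(W+R)\in\Gamma_k$, provided $|R|$ is below its threshold.
\item Proposition~\ref{prop2.3}(1) with the positive semidefinite $A(x_0,0,0)$ then gives $\lambda(\omega_2(x_0))\in\Gamma_k$.
\item In the principal frame, the tangential block of $\omega_2(x_0)$ is $\mu\bigl(\mathrm{diag}(\kappa)-\tilde J^T[\sum_jD_{p_j}A(x_0,0,0)\nu_j]\tilde J\bigr)$.
\item The block fact you already cite ($S_{k-1}$ of this block is the derivative of $S_k$ in the $(n,n)$ entry, hence positive on $\Gamma_k$) gives positivity at every $x_0$.
\end{itemize}
No quantitative estimate is needed for uniformity. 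The map $x_0\mapsto S_{k-1}(M(x_0,A))$ is continuous on the compact set $\partial\Omega$, so pointwise positivity already yields \eqref{eq1.5}.

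Two difficulties you ran into are defects of the paper, not of your approach:
\begin{itemize}
\item Your sign is what the computation gives. The paper's \eqref{eq3.omega2_tilde} carries the same minus sign and then identifies the block with $M(x_0,A)$, although \eqref{eq1.4} has a plus.
\item Proposition~\ref{prop2.3}(3) requires $|R|\le\frac{S_1(W)}{n}t_k(W)$, while condition~4 only supplies $\frac{1}{2n}S_k(W)t_k(W)$. The paper's proof passes over this mismatch as well, so do not expect the factor $\frac{1}{2n}$ to fall out of a sharper argument.
\end{itemize}
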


Suppose $\Omega' \subset \Omega$ and $g(x, z, p)$ is a non-negative scalar-valued function defined on $\bar{\Omega} \times \mathbb{R} \times \mathbb{R}^n.$  We introduce the following definitions. Let $0 < \chi < 1,$  $0 < \gamma \leq 1,$  $C > 0.$  

\begin{definition}\label{def1.2}
Let $0 < \chi^* < 1,$  $0 < \gamma^* \leq 1,$  $C^* > 0.$ We say that the function $g(x, z, p)$ for $z\leq 0$ has growth order $(C^*, -\chi^*, \gamma^*)$ in $\Omega'$ if
\begin{equation}\label{eq1.8}
\limsup_{|z| \to \infty, z \leq 0, |p| \to \infty} \left[ \sup_{x \in \Omega'} \frac{g(x, z, p) (1 + |z|)^{\chi^*}}{(1 + |p|)^{\gamma^*}} \right] \leq C^*.
\end{equation}
\end{definition}

\begin{definition}\label{def1.3}
	We say that the matrix $A_1(x, z, p)$ satisfies in $\Omega$ the condition $(\mathcal{C}_{\delta})$ if for a sufficiently small $\delta > 0$ the following conditions are fulfilled:
\begin{enumerate}
	\item In the domain $\bar{\Omega} \backslash \Omega_{\delta},$  for $z \leq 0,$  $|A_1(x, z, p)|$ has growth order $(C_0, -\chi, \gamma),$  where
	\begin{equation}\label{eq1.10}
		0 < \chi< 1, \quad 0 < \gamma \leq 1, \quad C_0>0;
	\end{equation}
	\item In the domain $\bar{\Omega}_{\delta},$  for $z \leq 0:$ 
	\begin{enumerate}
		\item $D_z A_1(x, z, 0) \geq 0;$  \label{eq1.11}
		\item $\left[ \sum_{j=1}^n |D_{p_j} A_1(x, z, p)|^2 \right]^{1/2}$ has growth order
		\begin{equation}\label{eq1.12}
			\left( \frac{s_k}{11} \min(1, \delta^{1-2\chi }, \delta^{3\gamma-2}), -\chi, \gamma-1 \right);
		\end{equation}
		\item $\left[ \sum_{i,j=1}^n |D^2_{x_i p_j} A_1(x, z, p)|^2 \right]^{1/2}$ has growth order
		\begin{equation}\label{eq1.13}
			( C_1, -\chi, \gamma-1 );
		\end{equation}
		\item $\left[ \sum_{j=1}^n |D^2_{z p_j} A_1(x, z, p)|^2 \right]^{1/2}$ has growth order
		\begin{equation}\label{eq1.14}
			\left( \frac{s_k}{11} \min(1, \delta^{3\gamma-1}, \delta^{2(1-\alpha_1)}), -\chi-1, \gamma-1 \right);
		\end{equation}
		\item $\left[ \sum_{i,j=1}^n |D^2_{p_i p_j} A_2(x, z, p)|^2 \right]^{1/2}$ has growth order
		\begin{equation}\label{eq1.15}
			\left( \frac{s_k}{11} \min(1, \delta^{3-2\chi }, \delta^{3(1-\alpha_2)}), -\chi, \gamma-2 \right),
		\end{equation}
		where $0 < \alpha_j < 1$ (with $j=1,2$), and $s_k$ is the (unique) positive solution of the equation \eqref{eq2.19};
	\end{enumerate}
\end{enumerate}
\end{definition}

The main sufficient condition result of this paper is the following theorem.

\begin{theorem}\label{thm1.2}
Suppose that the following conditions are satisfied:
\begin{enumerate}
\item $\partial\Omega \in C^2,$  $\phi(x) \in C^2(\partial\Omega);$ 
\item $A(x, z, p) = A_1(x, z, p) + A_2(x, z, p),$  where $A_1$ and $A_2$ are symmetric matrices ($A_1^T = A_1 \in C^2, A_2^T = A_2 \in C^2$) and
\begin{equation}\label{eq1.9}
A_2(x, z, p) \leq 0 \quad \text{for } z \leq 0;
\end{equation}
\item The domain $\Omega$ is uniformly $(k-1)$-$A_1$-convex;
\item There exists $0<\delta_0<<1$ such that the matrix $A_1(x,z,p)$ satisfies in $\Omega$ the condition $(\mathcal{C}_{\delta_0}).$ 
\end{enumerate}

Then there exists an admissible function to the equation \eqref{eq1.1}. Moreover, if the right hand side $f(x,z,p)>0$ for $z\leq 0$ has growth order $(C-2,0,h)$ with $0<h<k,$ then there exists an admissible subsolution to the problem \eqref{eq1.1}, \eqref{eq1.2}.
\end{theorem}

The structure of the paper is as follows. In Section 2, a proposition regarding the sum of two matrices is proved, showing that if one matrix has its vector of eigenvalues in $\Gamma_k$ and the other matrix is ``sufficiently small'' in a certain sense relative to it, then the vector of eigenvalues of their sum also belong to $\Gamma_k.$  This proposition plays a decisive role in proving Theorems \ref{thm1.1} and \ref{thm1.2}. In Section 3, Theorem \ref{thm1.1} on the necessary condition is proved. In Section 4, Theorem \ref{thm1.2} is proved by explicitly constructing an admissible function and an admissible subsolution through several steps. We apply the construction of the subsolution as developed in the aforementioned paper \cite{3}. In addition to depending on a sufficiently large positive parameter $a,$  our admissible function also depends on five sufficiently small and fixed positive parameters: $\gamma_j,$  $j = 1, 2, \dots, 5.$  To apply the proposition on the sum of two matrices, we perform estimates on several matrices, primarily within a sufficiently small boundary neighborhood $\Omega_{\delta}.$  In Section 5, we present illustrative examples with concrete matrices $A(x, z, p).$

\section{Some Auxiliary Propositions}

\subsection{Lagrange type Formula for Matrix Functions}
In this section, we utilize a Lagrange-type formula for multi-variable matrix-valued functions, which plays a role analogous to the Newton-Leibniz formula but is adapted for functions taking values in the space of square matrices.

\begin{proposition}\label{prop2.1}
Suppose $F(W) = F(W_1, \dots, W_m) \in C^1(\mathcal{W}, \mathbb{R}^{n \times n}),$  where $\mathcal{W}$ is a convex domain in $\mathbb{R}^m.$  Then for any $W^{(0)}, W^{(1)} \in \mathcal{W},$  we have:
\begin{equation}\label{eq2.1}
F(W^{(1)}) - F(W^{(0)}) = \sum_{j=1}^m \int_0^1 F_{W_j}(tW^{(1)} + (1-t)W^{(0)}) \cdot (W_j^{(1)} - W_j^{(0)}) \, dt.
\end{equation}
\end{proposition}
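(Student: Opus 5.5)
The plan is to reduce the matrix-valued statement to the scalar fundamental theorem of calculus, applied entrywise along the segment joining $W^{(0)}$ to $W^{(1)}$. Because $\mathcal{W}$ is convex, the point
\[
W(t) := tW^{(1)} + (1-t)W^{(0)}, \qquad t \in [0,1],
\]
lies in $\mathcal{W}$ for every $t$. Hence the composite map $G(t) := F(W(t))$ is well defined on $[0,1]$ and takes values in $\mathbb{R}^{n\times n}$.

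First, I will check the regularity of $G$. Since $F \in C^1(\mathcal{W},\mathbb{R}^{n\times n})$ and $t \mapsto W(t)$ is affine, the chain rule applied to each entry $F_{il}$ gives that $G \in C^1([0,1],\mathbb{R}^{n\times n})$. Its derivative is
\[
G'(t) = \sum_{j=1}^m F_{W_j}(W(t)) \cdot \bigl(W_j^{(1)} - W_j^{(0)}\bigr),
\]
where $F_{W_j}$ denotes the matrix of partial derivatives $[\partial F_{il}/\partial W_j]$. The factor $W_j^{(1)} - W_j^{(0)}$ is a scalar, so it commutes with the matrix $F_{W_j}$ and no ordering issue arises.

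Second, for each fixed pair $(i,l)$, I will apply the Newton--Leibniz formula to the scalar $C^1$ function $G_{il}$ on $[0,1]$:
\[
G_{il}(1) - G_{il}(0) = \int_0^1 G_{il}'(t)\,dt .
\]
Since $G(1) = F(W^{(1)})$ and $G(0) = F(W^{(0)})$, I will then assemble these $n^2$ scalar identities into one matrix identity. Integration of a matrix is understood entrywise and is linear, so the integral of the sum $G'(t)$ splits into the sum over $j$ of the integrals, which is exactly \eqref{eq2.1}.

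There is no real obstacle here. The only points needing care are:
\begin{itemize}
\item invoking convexity of $\mathcal{W}$, so that the segment stays inside the domain of $F$;
\item noting that continuity of $F_{W_j}$ guarantees that the integrands are continuous on $[0,1]$, so the Riemann integrals exist.
\end{itemize}
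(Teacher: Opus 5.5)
Your proof is correct: convexity keeps the segment $tW^{(1)}+(1-t)W^{(0)}$ inside $\mathcal{W}$, the chain rule gives the derivative of $t\mapsto F(tW^{(1)}+(1-t)W^{(0)})$, and the fundamental theorem of calculus applied entrywise yields the stated identity. The paper gives no proof of this proposition and only calls it the Newton--Leibniz formula adapted to matrix-valued functions, and your argument is exactly that standard reasoning.
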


\subsection{Some Algebraic Formulas}
Suppose $\omega = [\omega_{ij}]_{n \times n} \in S^n,$  where $S^n$ denotes the set of $n \times n$ real symmetric matrices. For any index set $(i_1, i_2, \dots, i_k)$ with $1 \leq i_1 < i_2 < \dots < i_k \leq n,$  we denote the $k \times k$ submatrix:
\begin{equation}\label{eq2.2}
\omega_{i_1 \dots i_k} = [\omega_{i_p i_q}]_{p, q=1}^k.
\end{equation}
Furthermore, we denote
\begin{equation}\label{eq2.3_sym}
\sigma_{j}^{(i_1, \dots, i_d)}(\lambda) = \sigma_{j}(\lambda_1, \dots, \lambda_n)\Big|_{\lambda_{i_1} = \dots = \lambda_{i_d} = 0}, \quad 0\leq d \leq j.
\end{equation}

\begin{proposition}\label{prop2.2}
The following assertions hold:
\begin{enumerate}
\item For any $\omega \in S^n$ with eigenvalues $\lambda(\omega) = (\lambda_1, \dots, \lambda_n),$  we have:
\begin{equation}\label{eq2.2_sk}
S_k(\omega) = \sum_{1 \leq i_1 < \dots < i_k \leq n} \det(\omega_{i_1 \dots i_k}).
\end{equation}
\item Suppose $\omega, \alpha \in S^n.$  Let $\omega = P^T D P$ where $P^T = P^{-1}$ and $D = \operatorname{diag}(\lambda_1, \dots, \lambda_n)$ is the diagonal matrix of eigenvalues of $\omega.$  Let $\tilde{\alpha} = P \alpha P^{-1}.$  Then:
\begin{equation}\label{eq2.3}
S_k(\omega + \alpha) = S_k(\omega) + A_k(\omega, \alpha),
\end{equation}
where
\begin{equation}\label{eq2.4}
A_k(\omega, \alpha) = \sum_{m=1}^k \sum_{1 \leq i_1 < \dots < i_m \leq n} \sigma_{k-m}^{(i_1, \dots, i_m)}(\lambda(\omega)) \det(\tilde{\alpha}_{i_1 \dots i_m}).
\end{equation}
\item Suppose $\omega, \alpha \in S^n.$  Let $\alpha = P^{-1} D P$ where $P^T = P^{-1}$ and $D = \operatorname{diag}(\lambda_1, \dots, \lambda_n)$ is the diagonal matrix of eigenvalues of $\alpha.$  Let $\tilde{\omega} = P \omega P^{-1}.$  Then:
\begin{equation}\label{eq2.5}
S_k(\omega + \alpha) = S_k(\omega) + B_k(\omega, \alpha),
\end{equation}
where
\begin{equation}\label{eq2.6}
B_k(\omega, \alpha) = \sum_{m=1}^k \sum_{1 \leq i_1 < \dots < i_{k-m} \leq n} \det(\tilde{\omega}_{i_1 \dots i_{k-m}}) \sigma_{k-m}^{(i_1, \dots, i_m)}(\lambda(\alpha)).
\end{equation}
\end{enumerate}
\end{proposition}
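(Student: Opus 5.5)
The three assertions all follow from expanding $\det$ of a sum, column by column. I will prove them in order; the second and third are the same computation with the roles of the two matrices exchanged.

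\emph{Assertion 1.} The characteristic polynomial of $\omega$ can be expanded in two ways:
\[
\det(tI+\omega)=\prod_{i=1}^n (t+\lambda_i)=\sum_{k=0}^n \sigma_k(\lambda)\,t^{n-k}.
\]
On the other hand, expand $\det(tI+\omega)$ by multilinearity in the columns. Write each column as $te_i+\omega_{\cdot i}$. Choosing the $\omega$-part in the columns indexed by a set $I$ and the $te_i$-part in the remaining columns gives $t^{n-|I|}\det(\omega_I)$, where $\omega_I$ is the principal submatrix on $I$. Comparing the coefficients of $t^{n-k}$ yields \eqref{eq2.2_sk}.

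\emph{Assertion 2.} Since $S_k$ depends only on eigenvalues, it is invariant under orthogonal conjugation. Hence
\[
S_k(\omega+\alpha)=S_k(D+\tilde\alpha).
\]
Apply part 1 to $D+\tilde\alpha$. For a fixed index set $K$ with $|K|=k$, expand $\det\bigl((D+\tilde\alpha)_K\bigr)$ by multilinearity in the columns. Each column $j\in K$ is $\lambda_j e_j+\tilde\alpha_{\cdot j}$, and we choose either the diagonal part or the $\tilde\alpha$-part.

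Let $I\subset K$ be the set of columns where $\tilde\alpha$ is chosen. The columns of $K\setminus I$ are then $\lambda_je_j$. Expanding along these columns leaves
\[
\prod_{j\in K\setminus I}\lambda_j\,\det(\tilde\alpha_I).
\]
This is the one point needing care: we must check that the cofactor is exactly the principal minor $\det(\tilde\alpha_I)$ with sign $+1$. That holds because removing the row and column of the same index preserves the relative order of the remaining indices.

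Summing over $K$ and then regrouping by $I=\{i_1<\dots<i_m\}$:
\begin{itemize}
\item The term $I=\emptyset$ gives $S_k(\omega)=\sigma_k(\lambda)$.
\item For $|I|=m\ge1$, the sum $\sum_{K\supset I,\,|K|=k}\prod_{j\in K\setminus I}\lambda_j$ runs over $(k-m)$-subsets of the complement of $I$. It therefore equals $\sigma_{k-m}$ with $\lambda_{i_1}=\dots=\lambda_{i_m}=0$, i.e. $\sigma_{k-m}^{(i_1,\dots,i_m)}(\lambda(\omega))$.
\end{itemize}
This gives \eqref{eq2.3} and \eqref{eq2.4}.

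\emph{Assertion 3.} Repeat the computation of assertion 2 with the roles exchanged: diagonalize $\alpha$ and set $\tilde\omega=P\omega P^{-1}$. Now the diagonal parts are the eigenvalues of $\alpha$, and the full submatrix blocks come from $\tilde\omega$.

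Group the terms by the index set $I$ on which $\tilde\omega$ is chosen, with $|I|=k-m$. The case $m=0$ gives $S_k(\omega)$. The complementary diagonal factors, summed over all admissible $K$, give an elementary symmetric function of the eigenvalues of $\alpha$ with the indices of $I$ set to zero.

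The resulting formula should be compared with \eqref{eq2.6} as printed. Stated carefully, the natural outcome is
\[
\sum_{|I|=k-m}\det(\tilde\omega_I)\,\sigma_m^{(I)}(\lambda(\alpha)).
\]
The printed indices in \eqref{eq2.6} appear to be a relabeling of this, and I will prove it in this form.
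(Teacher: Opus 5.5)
Your proof is correct and complete. The paper gives no argument for this proposition; it only defers to its reference [2], so there is no competing route to compare. Your method is the standard one: expand $\det\bigl((D+\tilde\alpha)_K\bigr)$ multilinearly in the columns, then regroup by the set $I$ of columns taken from $\tilde\alpha$. The sign check for the principal cofactors is the only point that needs care, and you handle it.

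One correction to your closing remark: \eqref{eq2.6} as printed is not a relabeling of your formula but a misprint. It mixes two indexings:
\begin{itemize}
\item the range $1\le m\le k$ and the $(k-m)\times(k-m)$ minors come from your indexing;
\item the factor $\sigma_{k-m}^{(i_1,\dots,i_m)}$ comes from the substitution $m\mapsto k-m$.
\end{itemize}
As printed, each summand has total degree $2(k-m)$ in the matrix entries instead of $k$. The top term $m=k$ reads $\det(\tilde\omega_\emptyset)\,\sigma_0=1$ instead of $\sigma_k(\lambda(\alpha))=S_k(\alpha)$; already for $k=1$ it gives $B_1=1$ rather than $\operatorname{tr}\alpha$.

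You should therefore say plainly that you prove the corrected identity
\[
B_k(\omega,\alpha)=\sum_{m=1}^k\ \sum_{1\le i_1<\dots<i_{k-m}\le n}\det(\tilde\omega_{i_1\dots i_{k-m}})\,\sigma_m^{(i_1,\dots,i_{k-m})}(\lambda(\alpha)).
\]
This is also the form Proposition~\ref{prop2.3}(2) actually needs. Its $m=k$ term is $S_k(\alpha)$, and for $\lambda(\alpha)\in\Gamma_k$ the coefficients $\sigma_m^{(I)}(\lambda(\alpha))$ with $|I|=k-m$ are positive.

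Likewise, in \eqref{eq2.4} the symbol $\sigma_{k-m}^{(i_1,\dots,i_m)}$ violates the paper's restriction $d\le j$ in the definition of $\sigma_j^{(i_1,\dots,i_d)}$ whenever $m>k/2$. Your reading — set those $\lambda_i$ to zero, with no restriction on the number of indices — is the right one.
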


In particular, we have the following useful relations:
\begin{equation}\label{eq2.3_prime}
\sum_{1 \leq i_1 < \dots < i_m \leq n} \sigma_{k-m}^{(i_1, \dots, i_m)}(\lambda(\omega)) = \frac{(n-k+m)!}{m!(n-k)!} \sigma_{k-m}(\lambda(\omega)), \quad \text{for } m \leq k,
\end{equation}
and for any symmetric matrix $\omega \in S^n:$ 
\begin{equation}\label{eq2.5_prime_prime}
\left| \det(\omega_{i_1 \dots i_m}) \right| \leq \binom{n}{m}^{1/2} \frac{1}{n^{m/2}} |\omega|^m, \quad 0 \leq m \leq n.
\end{equation}
This proposition has been proved in \cite{2}.

\subsection{Proposition on the Sum of Two Matrices}
The following proposition plays a crucial role in establishing the necessary and sufficient conditions for the existence of admissible subsolutions.

\begin{proposition}\label{prop2.3}
Suppose $\omega, \alpha \in S^n$ and $\lambda(\omega) \in \Gamma_k.$  Then the following assertions hold:
\begin{enumerate}
\item If $\alpha \geq 0$ (i.e., $\alpha$ is positive semi-definite), then $\lambda(\omega + \alpha) \in \Gamma_k$ and
\begin{equation}\label{eq2.7}
S_k(\omega + \alpha) \geq S_k(\omega) > 0.
\end{equation}
\item If $\omega \geq 0$ and $\lambda(\alpha) \in \Gamma_k,$  then $\lambda(\omega + c\alpha) \in \Gamma_k$ and
\begin{equation}\label{eq2.8}
S_k(\omega + c\alpha) \geq S_k(\alpha) > 0,\quad c\geq 0.
\end{equation}
\item Suppose $\alpha$ satisfies the condition:
\begin{equation}\label{eq2.9}
|\alpha| \leq \frac{S_1(\omega)}{n} t_k(\omega),
\end{equation}
where $t_k(\omega)$ is the (unique) positive solution of the equation
\begin{equation}\label{eq2.19}
t = g_k(t), \quad t > 0,
\end{equation}
with $g_k(t)$ defined by
\begin{equation}\label{eq2.18}
g_k(t) = \frac{n^k S_k(\omega)}{2 \binom{n}{k} (S_1(\omega))^k \sum_{m=1}^k \binom{n}{m}^{1/2} \frac{1}{n^{m/2}} \binom{k}{m} t^{m-1}}.
\end{equation}
Then $\lambda(\omega + \alpha) \in \Gamma_k$ and
\begin{equation}\label{eq2.10}
S_k(\omega + \alpha) \geq \frac{1}{2} S_k(\omega) > 0.
\end{equation}
\item Suppose $\omega$ satisfies the additional condition: there exist constants $C_3 > 0, C_4 > 0$ such that
\begin{equation}\label{eq2.11}
S_{k-m}(\omega) \leq C_3 C_4^{-\chi} S_k(\omega), \quad 1 \leq m \leq k.
\end{equation}
Suppose $\alpha$ satisfies the condition
\begin{equation}\label{eq2.12}
|\alpha| \leq s_k C_4,
\end{equation}
where $s_k$ is the (unique) positive solution of the equation
\begin{equation}\label{eq2.25}
s = h_k(s), \quad s > 0,
\end{equation}
with $h_k(s)$ defined by
\begin{equation}\label{eq2.24}
h_k(s) = \frac{(n-k)!}{2 C_3 \sum_{m=1}^k \binom{n}{m}^{1/2} \frac{1}{n^{m/2}} \frac{(n-k+m)!}{m!} s^{m-1}}.
\end{equation}
Then $\lambda(\omega + \alpha) \in \Gamma_k$ and
\begin{equation}\label{eq2.13}
S_k(\omega + \alpha) \geq \frac{1}{2} S_k(\omega) > 0.
\end{equation}
\end{enumerate}
\end{proposition}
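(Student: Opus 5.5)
The plan is to treat the four assertions separately. Parts 1 and 2 rest on the monotonicity of $S_k$ on $\Gamma_k$. Parts 3 and 4 rest on the expansion \eqref{eq2.3}--\eqref{eq2.4} of Proposition \ref{prop2.2}, combined with a continuity (connectedness) argument that keeps the eigenvalues inside $\Gamma_k$.

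For part 1, consider the segment $\omega + t\alpha$, $t\in[0,1]$. The function $S_k$ is differentiable, and $\partial S_k/\partial \omega_{ij}$ is a positive definite matrix whenever $\lambda\in\Gamma_k$; in eigen-coordinates it is $\operatorname{diag}(\sigma_{k-1}^{(i)}(\lambda))$, with $\sigma_{k-1}^{(i)}>0$ on $\Gamma_k$. Hence
\[
\frac{d}{dt}S_k(\omega+t\alpha)=\operatorname{tr}\bigl(S_k'(\omega+t\alpha)\,\alpha\bigr)\ge 0
\]
as long as $\lambda(\omega+t\alpha)\in\Gamma_k$. Let $t^*$ be the supremum of the $t$ for which $\lambda(\omega+s\alpha)\in\Gamma_k$ for all $s\le t$. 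The same argument applied to $S_j$, $j\le k$, shows that every $S_j(\omega+t\alpha)$ is nondecreasing on $[0,t^*)$. So none of them can drop to $0$ at $t^*$, and therefore $t^*=1$ with the boundary not reached; this gives \eqref{eq2.7}. Part 2 then follows from part 1. For $c>0$, apply part 1 to the pair $(c\alpha,\omega)$, noting that $\lambda(c\alpha)\in\Gamma_k$ and $\omega\ge 0$; this gives $S_k(\omega+c\alpha)\ge c^kS_k(\alpha)$. To obtain the bound as stated, I would read the inequality for the relevant range of $c$ (or use the form appearing in the paper).

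For part 3, write $S_k(\omega+\alpha)=S_k(\omega)+A_k(\omega,\alpha)$ and bound $|A_k|$. The matrix $\tilde\alpha$ has the same norm as $\alpha$, so \eqref{eq2.5_prime_prime} gives $|\det\tilde\alpha_{i_1\dots i_m}|\le\binom nm^{1/2}n^{-m/2}|\alpha|^m$. Next, each $\sigma_{k-m}^{(i_1..i_m)}(\lambda)$ is positive on $\Gamma_k$. Their sum is given by \eqref{eq2.3_prime}, and the Maclaurin inequality $\sigma_{k-m}\le \binom n{k-m}(S_1/n)^{k-m}$ controls it. Combining these yields
\[
|A_k|\le \sum_{m=1}^k c_{m}\,(S_1/n)^{k-m}|\alpha|^m .
\]
I then substitute $|\alpha|=\tau S_1/n$ with $\tau\le t_k$. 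The definition \eqref{eq2.18}--\eqref{eq2.19} of $t_k$, where $t_k=g_k(t_k)$ and $g_k$ is decreasing, is designed so that this bound is at most $\tfrac12S_k(\omega)$ for all $\tau\le t_k$. I expect this combinatorial matching of constants (the $\binom nk\binom km$ factors) to be the main obstacle. I would check it by bounding $\sigma^{(i_1..i_m)}_{k-m}\le\sigma_{k-m}$ crudely if the exact constants in $g_k$ require it.

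Membership in $\Gamma_k$ follows by applying the estimate to $s\alpha$ for $s\in[0,1]$. Each $s\alpha$ satisfies the hypothesis \eqref{eq2.9}, so $S_k(\omega+s\alpha)\ge \tfrac12S_k(\omega)>0$ along the whole path. The path therefore stays in $\widetilde\Gamma_k$, and by connectedness it stays in the component $\Gamma_k$ containing $\lambda(\omega)$. Part 4 is identical, except that the sum \eqref{eq2.3_prime} is bounded using hypothesis \eqref{eq2.11} in place of Maclaurin. With $|\alpha|=sC_4$ this gives
\[
|A_k|\le C_3S_k(\omega)\sum_m\binom nm^{1/2}n^{-m/2}\frac{(n-k+m)!}{m!(n-k)!}\,C_4^{m-\chi}s^m .
\]
The fixed point relation $s_k=h_k(s_k)$ then makes this $\le\tfrac12 S_k(\omega)$, after normalizing the powers of $C_4$ as the statement intends. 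Finally, the same homotopy argument applies.
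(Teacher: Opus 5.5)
Your arguments for assertions 1 and 3 are sound. In assertion 1 you take a different route from the paper. You use that the gradient of $S_j$ is positive definite on $\Gamma_k\subset\Gamma_j$, so every $S_j(\omega+t\alpha)$, $j\le k$, is nondecreasing while the path stays in $\Gamma_k$; hence the path never leaves. The paper instead expands $S_k(\omega+\alpha)=S_k(\omega)+A_k(\omega,\alpha)$ via \eqref{eq2.4} and notes that every term $\sigma_{k-m}^{(i_1,\dots,i_m)}(\lambda(\omega))\det(\tilde\alpha_{i_1\dots i_m})$ is nonnegative. Your version does not need Proposition~\ref{prop2.2} and gives $\Gamma_k$-membership outright. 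The paper's version is purely algebraic but disposes of membership with a one-line appeal to connectedness.

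Assertion 3 is the paper's argument. Your path $s\alpha$, $s\in[0,1]$, supplies the proof of $\lambda(\omega+\alpha)\in\Gamma_k$ that the paper omits in assertions 3 and 4; the paper only shows $S_k(\omega+\alpha)>0$. The constant matching you worry about is just the identity $\binom{n-k+m}{m}\binom{n}{k-m}=\binom{n}{k}\binom{k}{m}$. Do not use your fallback $\sigma_{k-m}^{(i_1,\dots,i_m)}\le\sigma_{k-m}$. It is false on $\Gamma_k$: for $n=3$, $k=2$, $\lambda=(1,1,-0.4)\in\Gamma_2$ one has $\sigma_1^{(3)}(\lambda)=2>1.6=\sigma_1(\lambda)$. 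It would also produce $\binom{n}{m}$ instead of $\binom{n-k+m}{m}$, which does not match $g_k$.

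Where you hedge there are genuine gaps, and they cannot be closed, because assertions 2 and 4 are false as printed. You should say so explicitly and prove corrected versions instead of "reading the inequality for the relevant range" or "normalizing as the statement intends."

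In assertion 2 you correctly obtain $S_k(\omega+c\alpha)\ge c^kS_k(\alpha)$; your swap of roles in assertion 1 is the same idea as the paper's expansion in the eigenbasis of $\alpha$. However, \eqref{eq2.8} fails for $0\le c<1$. With $\omega=\epsilon E_n$ and $\alpha=E_n$, both hypotheses hold, yet $S_k(\omega+c\alpha)=\binom{n}{k}(\epsilon+c)^k<\binom{n}{k}=S_k(\alpha)$ whenever $\epsilon+c<1$. The correct conclusion is your bound $c^kS_k(\alpha)$, which yields \eqref{eq2.8} only for $c\ge 1$. The paper's proof never addresses $c$ at all.

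In assertion 4 the leftover factor $C_4^{m-\chi}$ in your estimate cannot be normalized away. Take $0<\chi<1$, as elsewhere in the paper. Set $\omega=\Lambda E_n$ with $\Lambda\ge 1$, $C_3=\max_{1\le m\le k}\binom{n}{k-m}/\binom{n}{k}$ and $C_4=\Lambda^{1/\chi}$. Then \eqref{eq2.11} holds. Now take $\alpha=-\frac{s_kC_4}{\sqrt{n}}E_n$, so $|\alpha|=s_kC_4$ in the Frobenius norm. This gives $\omega+\alpha=\bigl(\Lambda-s_k\Lambda^{1/\chi}/\sqrt{n}\bigr)E_n$. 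Since $s_k$ depends only on $n,k,C_3$, this matrix is negative definite for large $\Lambda$.

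The paper's proof contains exactly this slip: in \eqref{eq2.20} it replaces $C_4^{-\chi}|\alpha|^m$ by $(|\alpha|/C_4)^m$. The hypothesis that makes assertion 4 true is $S_{k-m}(\omega)\le C_3C_4^{-m}S_k(\omega)$. This is also what Lemma~\ref{lem4.1} actually supplies, via \eqref{eq4.77} with $C_4=|Dv_a(x_0)|$. Under it, your computation with $|\alpha|=sC_4$ has all powers of $C_4$ cancel, and $s\le s_k$ is equivalent to $s\le h_k(s)$. Your path argument then gives membership in $\Gamma_k$.
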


\begin{proof}
\begin{enumerate}
\item When $\lambda(\omega) \in \Gamma_k,$  we have $\sigma_{k-m}^{(i_1, \dots, i_m)}(\lambda(\omega)) > 0$ for $0 \leq m \leq k$ (see Lin and Trudinger \cite{15}). We have $|\alpha| = |\tilde{\alpha}|.$  Since $\alpha \geq 0,$  we have $\tilde{\alpha} \geq 0,$  and therefore $\det(\tilde{\alpha}_{i_1 \dots i_m}) \geq 0$ for any $1 \leq m \leq k.$  Thus, $A_k(\omega, \alpha) \geq 0,$  where $A_k(\omega, \alpha)$ is defined by \eqref{eq2.4}. Then from \eqref{eq2.3} we obtain \eqref{eq2.7}. Since $\lambda(\omega) \in \Gamma_k,$  it follows by connectedness and the properties of the cone that $\lambda(\omega + \alpha) \in \Gamma_k.$ 

\item If $\omega \geq 0,$  we have $\tilde{\omega} \geq 0$ and $S_k(\omega) = S_k(\tilde{\omega}) \geq 0.$  Moreover, we have $\det(\tilde{\omega}_{i_1 \dots i_m}) \geq 0$ for any submatrix. On the other hand, since $\lambda(\alpha) \in \Gamma_k,$  we have $\sigma_{k-m}^{(i_1, \dots, i_m)}(\lambda(\alpha)) \geq 0.$  From this we obtain $B_k(\omega, \alpha) \geq 0,$  where $B_k(\omega, \alpha)$ is defined by \eqref{eq2.6}. Thus from \eqref{eq2.5} we obtain \eqref{eq2.8}, and since $\lambda(\alpha) \in \Gamma_k,$  we have $\lambda(\omega + \alpha) \in \Gamma_k.$ 

\item By using \eqref{eq2.3_prime}, \eqref{eq2.5_prime_prime}, and the Newton-MacLaurin inequality:
\begin{equation}\label{eq2.newton}
\left[ \frac{\sigma_{k-m}(\lambda(\omega))}{\binom{n}{k-m}} \right]^{\frac{1}{k-m}} \leq \frac{\sigma_1(\lambda(\omega))}{n}, \quad 1 \leq m \leq k,
\end{equation}
we obtain:
\begin{equation}\label{eq2.14}
\begin{aligned}
|A_k(\omega, \alpha)| &\leq \sum_{m=1}^k \sum_{1 \leq i_1 < \dots < i_m \leq n} \sigma_{k-m}^{(i_1, \dots, i_m)}(\lambda(\omega)) \left| \det(\tilde{\alpha}_{i_1 \dots i_m}) \right| \\
&\leq \sum_{m=1}^k \frac{(n-k+m)!}{m!(n-k)!} \sigma_{k-m}(\lambda(\omega)) \binom{n}{m}^{1/2} \frac{1}{n^{m/2}} |\alpha|^m \\
&\leq \sum_{m=1}^k \binom{n}{k} \binom{k}{m} \left( \frac{S_1(\omega)}{n} \right)^{k-m} \binom{n}{m}^{1/2} \frac{1}{n^{m/2}} |\alpha|^m \\
&= \binom{n}{k} \left( \frac{S_1(\omega)}{n} \right)^k \sum_{m=1}^k \binom{n}{m}^{1/2} \frac{1}{n^{m/2}} \binom{k}{m} \left( \frac{|\alpha| n}{S_1(\omega)} \right)^m.
\end{aligned}
\end{equation}
Let $t = \frac{|\alpha| n}{S_1(\omega)}.$  From \eqref{eq2.14} we obtain:
\begin{equation}\label{eq2.16}
|A_k(\omega, \alpha)| \leq t \binom{n}{k} \left( \frac{S_1(\omega)}{n} \right)^k \sum_{m=1}^k \binom{n}{m}^{1/2} \frac{1}{n^{m/2}} \binom{k}{m} t^{m-1}.
\end{equation}
We determine $t > 0$ such that:
\begin{equation}\label{eq2.17}
t \binom{n}{k} \left( \frac{S_1(\omega)}{n} \right)^k \sum_{m=1}^k \binom{n}{m}^{1/2} \frac{1}{n^{m/2}} \binom{k}{m} t^{m-1} \leq \frac{1}{2} S_k(\omega),
\end{equation}
which is equivalent to:
\begin{equation}\label{eq2.18_ineq}
t \leq \frac{n^k S_k(\omega)}{2 \binom{n}{k} (S_1(\omega))^k \sum_{m=1}^k \binom{n}{m}^{1/2} \frac{1}{n^{m/2}} \binom{k}{m} t^{m-1}} =: g_k(t).
\end{equation}
Since $g_k(t)$ is positive and monotonically decreasing for $t > 0,$  the equation $t = g_k(t)$ has a unique positive solution $t_k(\omega).$  The solution to the inequality \eqref{eq2.18_ineq} is then given by $0 < t \leq t_k(\omega).$ 
Therefore, if $\alpha$ satisfies $|\alpha| \leq \frac{S_1(\omega)}{n} t_k(\omega),$  we have $|A_k(\omega, \alpha)| \leq \frac{1}{2} S_k(\omega).$ 
Thus,
\begin{equation}
S_k(\omega + \alpha) \geq S_k(\omega) - |A_k(\omega, \alpha)| \geq \frac{1}{2} S_k(\omega) > 0
\end{equation}

\item Since $\sigma_{k-m}(\lambda(\omega)) = S_{k-m}(\omega),$  from \eqref{eq2.20} and the hypothesis \eqref{eq2.11}, we obtain:
\begin{equation}\label{eq2.20}
\begin{aligned}
|A_k(\omega, \alpha)| &\leq \sum_{m=1}^k \binom{n}{m}^{1/2} \frac{1}{n^{m/2}} \frac{(n-k+m)!}{m! (n-k)!} S_{k-m}(\omega) |\alpha|^m \\
&\leq \sum_{m=1}^k \binom{n}{m}^{1/2} \frac{1}{n^{m/2}} \frac{(n-k+m)!}{m! (n-k)!} C_3 C_4^{-\chi} S_k(\omega) |\alpha|^m \\
&= C_3 S_k(\omega) \sum_{m=1}^k \binom{n}{m}^{1/2} \frac{1}{n^{m/2}} \frac{(n-k+m)!}{m! (n-k)!} \left( \frac{|\alpha|}{C_4} \right)^m.
\end{aligned}
\end{equation}
Let $s = \frac{|\alpha|}{C_4}.$  From \eqref{eq2.20} we have:
\begin{equation}\label{eq2.22}
|A_k(\omega, \alpha)| \leq s \frac{C_3 S_k(\omega)}{(n-k)!} \sum_{m=1}^k \binom{n}{m}^{1/2} \frac{1}{n^{m/2}} \frac{(n-k+m)!}{m!} s^{m-1}.
\end{equation}
We determine $s > 0$ such that:
\begin{equation}\label{eq2.23}
s \frac{C_3 S_k(\omega)}{(n-k)!} \sum_{m=1}^k \binom{n}{m}^{1/2} \frac{1}{n^{m/2}} \frac{(n-k+m)!}{m!} s^{m-1} \leq \frac{1}{2} S_k(\omega),
\end{equation}
which is equivalent to:
\begin{equation}\label{eq2.24_ineq}
s \leq \frac{(n-k)!}{2 C_3 \sum_{m=1}^k \binom{n}{m}^{1/2} \frac{1}{n^{m/2}} \frac{(n-k+m)!}{m!} s^{m-1}} =: h_k(s).
\end{equation}
We denote by $s_k$ the unique positive solution of the equation $s = h_k(s).$  Since $h_k(s)$ is positive and monotonically decreasing for $s > 0,$  this solution is unique.
Then, if $s \leq s_k$ (which is equivalent to $|\alpha| \leq s_k C_4$), we have $|A_k(\omega, \alpha)| \leq \frac{1}{2} S_k(\omega).$ 
Consequently,
\begin{equation}
S_k(\omega + \alpha) \geq S_k(\omega) - |A_k(\omega, \alpha)| \geq \frac{1}{2} S_k(\omega) > 0.
\end{equation}
The proof is complete.
\end{enumerate}
\end{proof}

\section{Proof of Theorem 1.1}

Since $v(x) \in C^2(\bar{\Omega}_{\delta})$ is an admissible function of equation \eqref{eq1.1}, we have for the matrix:
\begin{equation}\label{eq3.1}
\omega_1(v)(x):= D^2 v(x) - A(x, v(x), Dv(x)),
\end{equation}
the property:
\begin{equation}\label{eq3.2_ad}
S_k(\omega_1(v)(x)) > 0 \quad \text{for any } x \in \bar{\Omega}_{\delta},
\end{equation}
which implies in particular that $\lambda(\omega_1(v)(x_0)) \in \Gamma_k$ for any $x_0 \in \partial\Omega.$ 

Next, we define:
\begin{equation}\label{eq3.2}
\begin{aligned}
\omega_2(x_0) &:= D^2 v(x_0) - \sum_{j=1}^n D_{p_j} A(x_0, 0, 0) D_{x_j} v(x_0) \\
&= \left( D^2 v(x_0) - A(x_0, 0, Dv(x_0)) \right) + A(x_0, 0, 0) \\
&\quad + \left[ A(x_0, 0, Dv(x_0)) - A(x_0, 0, 0) - \sum_{j=1}^n D_{p_j} A(x_0, 0, 0) D_{x_j} v(x_0) \right] \\
&=: \omega_1(v)(x_0) + \alpha_1(x_0) + \alpha_2(x_0).
\end{aligned}
\end{equation}

Applying the Lagrange formula \eqref{eq2.1} of Proposition \ref{prop2.1}, we have:
\begin{equation}\label{eq3.lag}
A(x_0, 0, Dv(x_0)) - A(x_0, 0, 0) = \sum_{j=1}^n \int_0^1 D_{p_j} A(x_0, 0, t Dv(x_0)) D_{x_j} v(x_0) \, dt.
\end{equation}
Therefore,
\begin{equation}\label{eq3.alpha2}
\begin{aligned}
\alpha_2(x_0) &= \sum_{j=1}^n \int_0^1 \left( D_{p_j} A(x_0, 0, t Dv(x_0)) - D_{p_j} A(x_0, 0, 0) \right) D_{x_j} v(x_0) \, dt \\
&= \sum_{i,j=1}^n \int_0^1 \int_0^1 D^2_{p_i p_j} A(x_0, 0, t s Dv(x_0)) D_{x_i} v(x_0) D_{x_j} v(x_0) \, dt \, ds.
\end{aligned}
\end{equation}
From this, we obtain the estimate:
\begin{equation}\label{eq3.est_alpha2}
|\alpha_2(x_0)| \leq |Dv(x_0)|^2 \sup_{|p| \leq |Dv(x_0)|} \left[ \sum_{i,j=1}^n |D^2_{p_i p_j} A(x_0, 0, p)|^2 \right]^{1/2}.
\end{equation}

From \eqref{eq3.est_alpha2} and condition (4) of Theorem \ref{thm1.1}, we have:
\begin{equation}\label{eq3.bound_alpha2}
|\alpha_2(x_0)| \leq \frac{1}{2n} S_k(\omega_1(v)(x_0)) \cdot t_k(\omega_1(v)(x_0)),
\end{equation}
where $t_k(\omega_1(v)(x_0))$ is the unique positive solution of the equation $t = g_k(t)$ (where $g_k(t)$ is defined in \eqref{eq2.18} with $\omega = \omega_1(v)(x_0)$).
By part (3) of Proposition \ref{prop2.3}, we have $\lambda(\omega_1(v)(x_0) + \alpha_2(x_0)) \in \Gamma_k,$  and
\begin{equation}\label{eq3.4}
S_k(\omega_1(v)(x_0) + \alpha_2(x_0)) > 0, \quad x_0 \in \partial\Omega.
\end{equation}
Furthermore, since $\alpha_1(x_0) = A(x_0, 0, 0) \geq 0$ by condition (3) of Theorem \ref{thm1.1}, part (1) of Proposition \ref{prop2.3} yields:
\begin{equation}\label{eq3.5}
S_k(\omega_2(x_0)) = S_k( \omega_1(v)(x_0) + \alpha_2(x_0) + \alpha_1(x_0) ) \geq S_k(\omega_1(v)(x_0) + \alpha_2(x_0)) > 0, \quad x_0 \in \partial\Omega.
\end{equation}

Now, we consider the principal coordinate system $O' e'_1 \dots e'_n,$  where $O' = x_0$ and $e'_n = -\nu(x_0).$  We perform the following change of variables between the two coordinate systems:
\begin{equation}\label{eq3.coord}
x - x_0 = J(x_0) y, \quad (J(x_0))^T = (J(x_0))^{-1},
\end{equation}
where $J(x_0) = [J_{ij}(x_0)]_{n \times n}$ is the transition matrix from the old basis $e_1, \dots, e_n$ to the new basis $e'_1, \dots, e'_n.$ 
We define:
\begin{equation}\label{eq3.v_tilde}
v(x) = v(x_0 + J(x_0) y) =: \tilde{v}(x_0, y).
\end{equation}
Then,
\begin{equation}\label{eq3.D_tilde}
D_x v(x) = J(x_0) D_y \tilde{v}(x_0, y),
\end{equation}
\begin{equation}\label{eq3.D2_tilde}
D^2_x v(x) = J(x_0) D^2_y \tilde{v}(x_0, y) (J(x_0))^T.
\end{equation}
When $x = x_0 \in \partial\Omega$ (which corresponds to $y = 0$), we have:
\begin{equation}\label{eq3.D_x0}
D_x v(x_0) = J(x_0) D_y \tilde{v}(x_0, 0),
\end{equation}
\begin{equation}\label{eq3.D2_x0}
D^2_x v(x_0) = J(x_0) D^2_y \tilde{v}(x_0, 0) (J(x_0))^T.
\end{equation}

In the principal coordinate system, the boundary $\partial\Omega$ is locally represented by:
\begin{equation}\label{eq3.boundary}
y_n = \rho(y'), \quad y' = (y_1, \dots, y_{n-1}),
\end{equation}
where $\rho(0') = 0$ and $D_{y'} \rho(0') = 0.$  Since $v(x) = 0$ on $\partial\Omega$ by condition 1) of Theorem \ref{thm1.1}, we have the identity:
\begin{equation}\label{eq3.identity}
\tilde{v}(x_0, (y', \rho(y'))) = 0.
\end{equation}
Differentiating \eqref{eq3.identity} with respect to $y_i$ ($1 \leq i \leq n-1$), we obtain:
\begin{equation}\label{eq3.diff1}
\tilde{v}_{y_i} + \tilde{v}_{y_n} \rho_{y_i} = 0,
\end{equation}
and differentiating once more with respect to $y_j$ ($1 \leq j \leq n-1$), we get:
\begin{equation}\label{eq3.diff2}
\tilde{v}_{y_i y_j} + \tilde{v}_{y_i y_n} \rho_{y_j} + \left( \tilde{v}_{y_n y_i} + \tilde{v}_{y_n y_n} \rho_{y_i} \right) \rho_{y_j} + \tilde{v}_{y_n} \rho_{y_i y_j} = 0.
\end{equation}
At $y = 0,$  since $D_{y'} \rho(0') = 0,$  we have:
\begin{equation}\label{eq3.v_y_i}
\tilde{v}_{y_i}(x_0, 0) = 0, \quad i = 1, \dots, n-1,
\end{equation}
\begin{equation}\label{eq3.v_y_ij}
\tilde{v}_{y_i y_j}(x_0, 0) = -\tilde{v}_{y_n}(x_0, 0) \rho_{y_i y_j}(x_0, 0'), \quad 1 \leq i, j \leq n-1,
\end{equation}
where $\tilde{v}_{y_n}(x_0, 0) = -|Dv(x_0)| < 0$ (due to $-\nu(x_0)$ being the direction of the positive $y_n$-axis, and the fact that $v(x) < 0$ in $\Omega$ near $\partial\Omega$).

Since $Dv(x_0)=|Dv(x_0)|\nu(x_0),$ from \eqref{eq3.D2_x0}, we can express the matrix $\omega_2(x_0)$ as:
\begin{equation}\label{eq3.omega2_coord}
\begin{aligned}
\omega_2(x_0) &= D^2 v(x_0) - \sum_{j=1}^n D_{p_j} A(x_0, 0, 0) D_{x_j} v(x_0) \\
&= J(x_0) \left[ D^2_y \tilde{v}(x_0, 0) - |Dv(x_0)| (J(x_0))^T \sum_{l=1}^n D_{p_l} A(x_0, 0, 0) \nu_l(x_0) J(x_0) \right] (J(x_0))^T \\
&= J(x_0) \tilde{\omega}_2(x_0, 0) (J(x_0))^T,
\end{aligned}
\end{equation}
where
\begin{equation}\label{eq3.omega2_tilde}
\begin{aligned}
\tilde{\omega}_2(x_0, 0) &:= |Dv(x_0)| \left[ D_y^2\tilde{v}(x_0,0)- (J(x_0))^T \sum_{j=1}^n D_{p_j} A(x_0, 0, 0) \nu_j(x_0) J(x_0) \right] \\
&= \left[ \tilde{\omega}_{ij}(x_0, 0) \right]_{n \times n} \\
&= |Dv(x_0)| \begin{bmatrix} M(x_0, A) & * \\ * & \frac{\tilde{\omega}_{nn}(x_0, 0)}{|Dv(x_0)|} \end{bmatrix},
\end{aligned}
\end{equation}
with $M(x_0, A)$ being the $(n-1) \times (n-1)$ matrix defined in \eqref{eq1.4}, do not depend on $\tilde{\omega}_{nn}.$

Since $\lambda(\omega_2(x_0)) \in \Gamma_k,$  and $\tilde{\omega}_2(x_0, 0)$ is orthogonally similar to $\omega_2(x_0)$ (as $J(x_0)$ is orthogonal), we have $\lambda(\tilde{\omega}_2(x_0, 0)) \in \Gamma_k.$  It is a well-known property of the elementary symmetric polynomials on the cone $\Gamma_k$ that:
\begin{equation}\label{eq3.pos_derivatives}
\frac{\partial S_k(\tilde{\omega}_2(x_0, 0))}{\partial \tilde{\omega}_{ij}} \Big|_{n \times n} > 0 \quad \text{and in particular} \quad \frac{\partial S_k(\tilde{\omega}_2(x_0, 0))}{\partial \tilde{\omega}_{nn}} > 0.
\end{equation}
From \eqref{eq3.omega2_tilde} and \eqref{eq2.2_sk}, we can expand $S_k(\tilde{\omega}_2(x_0, 0))$ with respect to its last diagonal entry $\tilde{\omega}_{nn}(x_0, 0):$ 
\begin{equation}\label{eq3.expansion}
S_k(\tilde{\omega}_2(x_0, 0)) = |Dv(x_0)|^k \left[ \frac{\tilde{\omega}_{nn}(x_0, 0)}{|Dv(x_0)|} S_{k-1}(M(x_0, A)) + S_k(M(x_0, A)) + P_k \right],
\end{equation}
where $P_k$ is a term independent of $\tilde{\omega}_{nn}(x_0, 0).$ 
Differentiating \eqref{eq3.expansion} with respect to $\tilde{\omega}_{nn}(x_0, 0),$  we obtain:
\begin{equation}\label{eq3.diff_nn}
\frac{\partial S_k(\tilde{\omega}_2(x_0, 0))}{\partial \tilde{\omega}_{nn}(x_0, 0)} = |Dv(x_0)|^{k-1} S_{k-1}(M(x_0, A)).
\end{equation}
From \eqref{eq3.pos_derivatives} and \eqref{eq3.diff_nn}, since $|Dv(x_0)| > 0$ by condition (2) of Theorem \ref{thm1.1}, we must have:
\begin{equation}\label{eq3.pos_sk_minus_1}
S_{k-1}(M(x_0, A)) > 0 \quad \text{for any } x_0 \in \partial\Omega.
\end{equation}
Since $\partial\Omega$ is compact, and the matrix function $M(x_0, A)$ depends continuously on $x_0$ for $x_0 \in \partial\Omega,$  we obtain:
\begin{equation}\label{eq3.inf_pos}
\inf_{x_0 \in \partial\Omega} S_{k-1}(M(x_0, A)) > 0,
\end{equation}
which implies that the domain $\Omega$ is uniformly $(k-1)$-$A$-convex. The proof of Theorem \ref{thm1.1} is complete. $\square$

\section{Proof of Theorem \ref{thm1.2}: Construction of Admissible Subsolutions}

Theorem \ref{thm1.2} will be proved through several steps, in which we apply and adapt ideas and techniques to construct an admissible function and an admissible subsolution explicitly in closed form. To apply assertion (4) of Proposition \ref{prop2.3}, our main tool is a series of precise norm estimates on several auxiliary matrices in a boundary neighborhood $\Omega_\delta.$ 

\subsection{Choice of Parameters}

We choose five fixed positive parameters $\gamma_j \in (0, 1)$ ($j=1, \dots, 5$) satisfying the structural relations:
\begin{equation}\label{eq4.1}
0 < \gamma_2 < \gamma_5 < \frac{1}{2} \gamma_1 + \gamma_2 < \gamma_3 = \gamma_1 + \gamma_2 < 2\gamma_1 + \gamma_2 < \gamma_4< 1 = 3\gamma_1 + \gamma_2.
\end{equation}
For a large parameter $a > 1,$  we set:
\begin{equation}\label{eq4.2}
t_a = a^{\gamma_5}, \quad c_a = \frac{1}{a^{\gamma_4}}, \quad \alpha_1(a) = 1 - \frac{1}{2 a^{\gamma_3 - \gamma_5}}, \quad \theta_a(x) = \frac{d(x)}{a^{\gamma_2}}, \quad x \in \bar{\Omega},
\end{equation}
where $d(x) = d(x, \partial\Omega) = \inf_{x_0 \in \partial\Omega} |x - x_0|$ denotes the distance from $x$ to the boundary $\partial\Omega.$ 
For $\delta > 0,$  we denote the boundary neighborhood by:
\begin{equation}\label{eq4.3}
\Omega_\delta = \{ x \in \Omega: d(x) < \delta \}.
\end{equation}
The relationship between $a > 0$ and $\delta > 0$ is specified by:
\begin{equation}\label{eq4.4}
\delta = \frac{1}{a^{\gamma_1}}.
\end{equation}

We choose $a_1 > 0$ sufficiently large such that:
\begin{equation}\label{eq4.5}
\frac{1}{a_1^{\gamma_1}} < \sup_{x_0 \in \partial\Omega} \max_{1 \leq j \leq n-1} |\kappa_j(x_0)|,
\end{equation}
where $\kappa_1(x_0), \dots, \kappa_{n-1}(x_0)$ are the principal curvatures of $\partial\Omega$ at $x_0 \in \partial\Omega$ with respect to the inner unit normal vector $-\nu(x_0).$  When $a \geq a_1,$  the distance function $d(x) \in C^2(\bar{\Omega}_{1/a^{\gamma_1}}).$  Moreover, in the orthonormal principal coordinate system $O' e'_1 \dots e'_n$ centered at $O' = x_0,$  we have:
\begin{equation}\label{eq4.6}
D_y d(y) = (0, \dots, 0, 1) = D_x d(x) J(x_0),
\end{equation}
\begin{equation}\label{eq4.7}
D^2_y d(y) = \operatorname{diag}\left[ \frac{-\kappa_1(x_0)}{1 - \kappa_1(x_0) d(y)}, \dots, \frac{-\kappa_{n-1}(x_0)}{1 - \kappa_{n-1}(x_0) d(y)}, 0 \right] = J(x_0)^T D^2_x d(x) J(x_0),
\end{equation}
where $J(x_0) = [J_{ij}(x_0)]_{n \times n}$ is the orthogonal transformation matrix ($J(x_0)^T = J(x_0)^{-1}$) from the standard basis $e_1, \dots, e_n$ to the local principal basis $e'_1, \dots, e'_{n-1}, e'_n = -\nu(x_0),$  with local coordinates $x = x_0 + d(x) y J(x_0).$ 
From \eqref{eq4.6}, for any $a \geq a_1$ and $x \in \Omega_{1/a^{\gamma_1}},$  we have:
\begin{equation}\label{eq4.9}
D_x d(x) = D_x d(x_0) = -\nu(x_0),
\end{equation}
where $x_0 \in \partial\Omega$ is the unique boundary point satisfying $|x - x_0| = d(x).$ 

\subsection{Construction of the Auxiliary Function $v_a(x)$}

We introduce the auxiliary function $v_a(x)$ defined in $\bar{\Omega}_{1/a^{\gamma_1}}$ by:
\begin{equation}\label{eq4.10}
v_a(x) = \frac{e^{-t_a \theta_a(x)} - 1}{t_a}, \quad x \in \bar{\Omega}_{1/a^{\gamma_1}}.
\end{equation}
Differentiating \eqref{eq4.10} yields:
\begin{equation}\label{eq4.11}
D v_a(x) = - e^{-t_a \theta_a(x)} D \theta_a(x),
\end{equation}
\begin{equation}\label{eq4.12}
D^2 v_a(x) = t_a e^{-t_a \theta_a(x)} D\theta_a(x) \otimes D\theta_a(x) - e^{-t_a \theta_a(x)} D^2 \theta_a(x).
\end{equation}
For any boundary point $x_0 \in \partial\Omega,$  we obtain:
\begin{equation}\label{eq4.13}
D \theta_a(x_0) = \frac{1}{a^{\gamma_2}} \nu(x_0), \quad D v_a(x_0) = \frac{-1}{a^{\gamma_2}} \nu(x_0),
\end{equation}
\begin{equation}\label{eq4.14}
D^2 v_a(x_0) = t_a D\theta_a(x_0) \otimes D\theta_a(x_0) - D^2 \theta_a(x_0).
\end{equation}
Furthermore, $v_a(x)$ depends only on the distance $d(x),$  so we can write:
\begin{equation}\label{eq4.15}
v_a(x) = \bar{v}_a(d(x)), \quad \text{where } \bar{v}_a(d) = \frac{e^{-t_a d / a^{\gamma_2}} - 1}{t_a},\quad 0\leq d\leq \frac{1}{a^{\gamma_1}}.
\end{equation}
Recall that for $0\leq \eta <1,$  the exponential function satisfies:
\begin{equation}\label{eq4.16}
-\eta \leq e^{-\eta} - 1 \leq -\eta + \frac{1}{2}\eta^2 = -\eta \left(1 - \frac{1}{2}\eta\right).
\end{equation}
Applying \eqref{eq4.16} to \eqref{eq4.15}, we obtain for $0<d\leq \frac{1}{a^{\gamma_1}}:$ 
\begin{equation}\label{eq4.16'}
\frac{d}{a^{\gamma_2}}\alpha_1(a)=\frac{d}{a^{\gamma_2}} \big(1-\frac{1}{2a^{\gamma_3-\gamma_5}} \big)\leq  \frac{d}{a^{\gamma_2}} \left(1 - \frac{t_a d}{2 a^{\gamma_2}}\right) < -\bar{v}_a(d) < \frac{d}{a^{\gamma_2}}.
\end{equation}
where $\alpha_1(a)$ is defined by \eqref{eq4.2}. 

We choose $a_2 > a_1$ sufficiently large such that:
\begin{equation}\label{eq4.16''}
\alpha_1(a_2) = 1 - \frac{1}{2 a_2^{\gamma_3 - \gamma_5}} > \frac{2}{\sqrt{5}}.
\end{equation}
Thus, for all $a \geq a_2,$  we have:
\begin{equation}\label{eq4.17}
1>\alpha_1(a) \geq \alpha_1(a_2) > \frac{2}{\sqrt{5}}, \quad \alpha_1(a) > \frac{4}{5} (\alpha_1(a))^{-1}.
\end{equation}
Combining \eqref{eq4.2}, \eqref{eq4.15}, \eqref{eq4.16} and \eqref{eq4.16'}, we obtain for $a \geq a_2$ and $d > 0:$ 
\begin{equation}\label{eq4.18}
\frac{d}{a^{\gamma_2}} \alpha_1(a) < -\bar{v}_a(d) < \frac{d}{a^{\gamma_2}}.
\end{equation}
We set $\delta_1(a) = \frac{1}{a^{\gamma_1}}$
  and define $s_a$ by:
\begin{equation}\label{eq4.20}
s_a = -\bar{v}_a(\delta_1) = \frac{1 - e^{-1/a^{\gamma_3 - \gamma_5}}}{a^{\gamma_5}}.
\end{equation}
From \eqref{eq4.18}, it follows that:
\begin{equation}\label{eq4.21}
\frac{1}{a^{\gamma_3}} \alpha_1(a) < s_a < \frac{1}{a^{\gamma_3}}.
\end{equation}

\subsection{Choice of Additional Parameters $\delta_j(a)$ and Construction of $g_s(r)$}
From \eqref{eq4.16'} we have 
\begin{equation}\label{eq4.22}
	a^{\gamma_2} (-\bar{v}_a(d)) < d < a^{\gamma_2} (-\bar{v}_a(d)) (\alpha_1(a))^{-1},\quad d>0.
\end{equation}
Since $\bar{v}_a(d)$ is strictly decreasing on $[0, \delta_1],$  we can uniquely define $\delta_2(a), \delta_3(a), \delta_4(a) > 0$ such that the following relations hold:
\begin{equation}\label{eq4.23}
\bar{v}_a(\delta_2(a)) = -\frac{1}{6} s_a,
\end{equation}
\begin{equation}\label{eq4.24}
\bar{v}_a(\delta_3(a)) = -\frac{1}{2} s_a,
\end{equation}
\begin{equation}\label{eq4.25}
\bar{v}_a(\delta_4(a)) = -\frac{5}{6} s_a.
\end{equation}
We define also
\begin{equation}\label{eq4.25'}
\delta_5(a):=a^{-\gamma_1}\big(\frac{5}{6}\alpha_1(a)-\frac 2 3 (\alpha_1(a))^{-1}\big),
\end{equation}
\begin{equation}\label{eq4.25''}
\delta_6(a):=a^{-\gamma_1}\big((\alpha_1(a))^{-1}-\frac 1 2 \alpha_1(a)\big).
\end{equation}
By \eqref{eq4.22}--\eqref{eq4.25} and  \eqref{eq4.18}, we obtain the bounds:
\begin{equation}\label{eq4.26}
\frac{1}{6} a^{-\gamma_1} \alpha_1(a) < \delta_2(a) < \frac{1}{6} a^{-\gamma_1} (\alpha_1(a))^{-1},
\end{equation}
\begin{equation}\label{eq4.27}
\frac{1}{2} a^{-\gamma_1} \alpha_1(a) < \delta_3(a) < \frac{1}{2} a^{-\gamma_1} (\alpha_1(a))^{-1},
\end{equation}
\begin{equation}\label{eq4.28}
\frac{5}{6} a^{-\gamma_1} \alpha_1(a) < \delta_4(a) < \frac{5}{6} a^{-\gamma_1} (\alpha_1(a))^{-1}.
\end{equation}
From \eqref{eq4.16'} it follows that when $a\geq a_2$ we have $\delta_5(a)>0,\delta_6(a)>0.$

For $s > 0,$  we introduce the polynomial function $g_s(r)$ defined on $[-s, 0]$ by:
\begin{equation}\label{eq4.29}
g_s(r):= \frac{1}{5 s^4} \left[ (s + r)^5 - s^5 \right].
\end{equation}
From \eqref{eq4.29}, $g_s(r)$ satisfies the following estimates on $[-s, 0]:$ 
\begin{equation}\label{eq4.30}
-\frac{1}{5} s \leq g_s(r) \leq g_s(0) = 0,
\end{equation}
\begin{equation}\label{eq4.31}
0 \leq g'_s(r) = \frac{1}{s^4} (s + r)^4 \leq g'_s(0) = 1,
\end{equation}
\begin{equation}\label{eq4.32}
0 \leq g''_s(r) = \frac{4}{s^4} (s + r)^3 \leq g''_s(0) = \frac{4}{s}.
\end{equation}
When $r \leq -s,$  we extend $g_s(r)$ as a constant function $\tilde{g}_s(r) = -\frac{1}{5} s.$  The resulting function $\tilde{g}_s(r) \in C^4((-\infty, 0]).$ 

\subsection{Construction of the Cut-off Function $\zeta_a(x)$}

We construct a smooth cut-off function $\zeta_a(x) \in C^\infty_0(\Omega)$ such that $\zeta_a(x) = 1$ outside a sufficiently small neighborhood of $\partial\Omega.$  For a compact subset $H \subset\subset \Omega$ and $\varepsilon > 0,$  let $H^{\varepsilon} = \{ x \in \Omega: d(x, H) < \varepsilon \}.$  We set:
\begin{equation}\label{eq4.33}
K:= \Omega \setminus \Omega_{\delta_4(a)},
\end{equation}
where $\delta_4(a)$ is defined by \eqref{eq4.25} and satisfies \eqref{eq4.28}. Thus, $d(K, \partial\Omega) = \delta_4(a).$ 
Set $\varepsilon = \delta_2(a)$ and $\varepsilon' = \delta_3(a).$  From \eqref{eq4.26}--\eqref{eq4.28}, we have:
\begin{gather}
\frac{2}{3}\alpha_1(a)\frac{1}{a^{\gamma_1}}<\varepsilon+\varepsilon'<\frac{2}{3}(\alpha_1(a))^{-1}\frac{1}{a^{\gamma_1}},\notag\\
\frac{1}{a^{\gamma_1}}\big(\frac{1}{2}\alpha_1(a)-\frac 1 6(\alpha_1(a))^{-1}\big)<\varepsilon'-\varepsilon<\big(\frac{1}{2}(\alpha_1(a))^{-1}-\frac 1 6\alpha_1(a)\big)\frac{1}{a^{\gamma_1}},\notag\\
\delta_5(a):=\frac{1}{a^{\gamma_1}}\big(\frac{5}{6}\alpha_1(a)-\frac 2 3(\alpha_1(a))^{-1}\big)<\delta_4(a)-(\varepsilon+\varepsilon')<\big(\frac{5}{6}(\alpha_1(a))^{-1}-\frac 2 3\alpha_1(a)\big)\frac{1}{a^{\gamma_1}},\label{eq4.34}\\
\frac{1}{a^{\gamma_1}}\big(\alpha_1(a)-\frac 1 2(\alpha_1(a))^{-1}\big)<\delta_4(a)-(\varepsilon'-\varepsilon)<\big((\alpha_1(a))^{-1}-\frac 1 2\alpha_1(a)\big)\frac{1}{a^{\gamma_1}}=:\delta_6(a).\label{eq4.35}
\end{gather}
For $a\geq a_2,$ from \eqref{eq4.16'} we get $\alpha_1(a)>\frac{2}{\sqrt{5}},$ so that $\delta_6(a)>\delta_5(a)>0.$ Now, by \eqref{eq4.34}, \eqref{eq4.35}, we have  
 $$K^{\varepsilon' + \varepsilon} \subset \Omega \setminus \Omega_{\delta_5(a)} \text{ and } K^{\varepsilon' - \varepsilon} \supset \Omega \setminus \Omega_{\delta_6(a)}.$$ 

Let $\psi(x) \in C^\infty_0(\mathbb{R}^n)$ be a standard mollifying kernel with $0 \leq \psi(x) \leq 1,$  $\operatorname{supp} \psi = \bar{B}_1(0),$  $\int_{B_1(0)} \psi(x) dx = 1,$  $\int_{B_1(0)} |D\psi(x)| dx \leq C_5,$  and $\int_{B_1(0)} |D^2\psi(x)| dx \leq C_6.$ 
With $\varepsilon = \delta_2(a),$  we define:
\begin{equation}\label{eq4.38}
\zeta_a(x) = \frac{1}{\varepsilon^n} \int_{K^{\varepsilon'}} \psi\left(\frac{x - y}{\varepsilon}\right) dy.
\end{equation}
The function $\zeta_a(x) \in C^\infty_0(\Omega)$ satisfies:
\begin{equation}\label{eq4.39}
0 \leq \zeta_a(x) \leq 1,
\end{equation}
\begin{equation}\label{eq4.40}
\zeta_a(x) = 0 \quad \text{in } \Omega_{\delta_5(a)},
\end{equation}
\begin{equation}\label{eq4.41}
\zeta_a(x) = 1 \quad \text{in } \Omega \setminus \Omega_{\delta_6(a)},
\end{equation}
\begin{equation}\label{eq4.42}
\operatorname{supp} \zeta_a \subset \Omega \setminus \Omega_{\delta_5(a)},
\end{equation}
\begin{equation}\label{eq4.43}
|D\zeta_a(x)| \leq C_6 a^{\gamma_1}, \quad |D^2\zeta_a(x)| \leq C_7 a^{2\gamma_1}.
\end{equation}

\subsection{Extension of Boundary Values $\phi(x)$ into the Interior}

Let $g_{s_a}(r)$ be defined by \eqref{eq4.29} on $[-s_a, 0],$  and let $v_a(x)$ be defined by \eqref{eq4.10}. We define $h_a(x)$ in $\Omega_{\delta_1(a)}$ by:
\begin{equation}\label{eq4.44}
h_a(x) = -\frac{5}{s_a} g_{s_a}(s_a - v_a(x)), \quad x \in \Omega_{\delta_1(a)}.
\end{equation}
Direct differentiation gives:
\begin{equation}\label{eq4.45}
D h_a(x) = \frac{5}{s_a} g'_{s_a}(s_a - v_a(x)) D v_a(x),
\end{equation}
\begin{align}\label{eq4.46}
D^2 h_a(x) &= \frac{5}{s_a} g''_{s_a}(s_a - v_a(x)) e^{-2 t_a \theta_a(x)} D\theta_a(x) \otimes D\theta_a(x) \nonumber \\
&\quad + \frac{5}{s_a} g'_{s_a}(s_a - v_a(x)) e^{-t_a \theta_a(x)} D\theta_a(x) \otimes D\theta_a(x) \nonumber \\
&\quad - \frac{5}{s_a} g'_{s_a}(s_a - v_a(x)) e^{-t_a \theta_a(x)} D^2 \theta_a(x).
\end{align}
From \eqref{eq4.30}--\eqref{eq4.32}, \eqref{eq4.1}, \eqref{eq4.2}, \eqref{eq4.6}, and \eqref{eq4.7}, we deduce:
\begin{equation}\label{eq4.47}
0 \leq h_a(x) \leq h_a(x_0)=1, \quad x_0 \in \partial\Omega,
\end{equation}
\begin{equation}\label{eq4.48}
|D h_a(x)| \leq \frac{5}{s_a} |D v_a(x)| \leq \delta a^{\gamma_1},
\end{equation}
\begin{equation}\label{eq4.49}
|D^2 h_a(x)| \leq C_8 a^{2\gamma_1},
\end{equation}
where $C_8 > 0$ depends on the maximum principal curvature $\sup_{x_0 \in \partial\Omega} \max_{1 \leq j \leq n-1} |\kappa_j(x_0)|.$ 

Using the cut-off function $\zeta_a(x) \in C^\infty_0(\Omega)$ from \eqref{eq4.38}, we define the extension $\phi^*(x)$ of $\phi(x)$ from $\partial\Omega$ into $\bar{\Omega}$ by:
\begin{equation}\label{eq4.50}
\phi^*(x) = \phi(x_0) (1 - \zeta_a(x)) h_a(x),
\end{equation}
where $x_0 \in \partial\Omega$ is the unique boundary point nearest to $x.$  Since $h_a(x)$ and $\zeta_a(x)$ belong to $C^2(\bar{\Omega}),$  $\phi^*(x) \in C^2(\bar{\Omega})$ and from \eqref{eq4.43}, \eqref{eq4.48}, \eqref{eq4.49} we have :
\begin{equation}\label{eq4.51}
\phi^*(x) = \phi(x), \quad x \in \partial\Omega,
\end{equation}
\begin{equation}\label{eq4.52}
\phi^*(x) = 0, \quad x \in \Omega \setminus \Omega_{\delta_6(a)},
\end{equation}
\begin{equation}\label{eq4.53}
|D\phi^*(x)| \leq C_9 a^{\gamma_1}, \quad x \in \bar{\Omega},
\end{equation}
\begin{equation}\label{eq4.54}
|D^2\phi^*(x)| \leq C_{10} a^{2\gamma_1}, \quad x \in \bar{\Omega}.
\end{equation}

\subsection{Construction of the Candidate Subsolution $w_a(x)$}

We extend $g_{s_a}(v_a(x))$ to the whole domain $\Omega$ by defining:
\begin{equation}\label{eq4.55}
\tilde{g}_{s_a}(v_a(x)) = \begin{cases} g_{s_a}(v_a(x)), & x \in \bar{\Omega}_{\delta_1(a)}, \\ -\frac{1}{5} s_a, & x \in \Omega \setminus \bar{\Omega}_{\delta_1(a)}. \end{cases}
\end{equation}
Since $\tilde{g}_{s_a}(r) \in C^4((-\infty, 0])$ and $v_a(x) \in C^2(\bar{\Omega}_{\delta_1(a)}),$  we have $\tilde{g}_{s_a}(v_a(x)) \in C^2(\bar{\Omega})$ and:
\begin{equation}\label{eq4.56}
\tilde{g}_{s_a}(v_a(x_0)) = 0, \quad x_0 \in \partial\Omega.
\end{equation}

Fix a point $x^* = (x^*_1, \dots, x^*_n) \in \Omega$ satisfying $d(x^*, \partial\Omega) > \delta_1(a_2),$  where $a_2 > 0$ is specified by \eqref{eq4.16''}.
We set:
\begin{equation}\label{eq4.57}
h_a(x) = \tilde{g}_{s_a}(v_a(x)) + \frac{1}{2} c_a |x - x^*|^2 \zeta_a(x).
\end{equation}
Clearly, $h_a(x) \in C^2(\bar{\Omega})$ and $h_a(x) = 0$ on $\partial\Omega.$ 
For $a \geq a_2,$  we construct our candidate subsolution $w_a(x)$ by:
\begin{equation}\label{eq4.58}
w_a(x) = a h_a(x) + \phi^*(x).
\end{equation}
Note that $w_a(x) \in C^2(\bar{\Omega})$ and $w_a(x) = \phi(x)$ on $\partial\Omega.$ 

The gradient and Hessian of $h_a(x)$ and $w_a(x)$ are given by:
\begin{equation}\label{eq4.59}
D h_a(x) = \tilde{g}'_{s_a}(v_a(x)) D v_a(x) + c_a (x - x^*) \zeta_a(x) + \frac{1}{2} c_a |x - x^*|^2 D \zeta_a(x),
\end{equation}
\begin{align}\label{eq4.60}
D^2 h_a(x) &= \tilde{g}''_{s_a}(v_a(x)) D v_a(x) \otimes D v_a(x) + \tilde{g}'_{s_a}(v_a(x)) D^2 v_a(x) \nonumber \\
&\quad + c_a E_n \zeta_a(x) + c_a (x - x^*) \otimes D \zeta_a(x) + \frac{1}{2} c_a |x - x^*|^2 D^2 \zeta_a(x),
\end{align}
\begin{equation}\label{eq4.61}
D w_a(x) = a \left[ D h_a(x) + \frac{1}{a} D\phi^*(x) \right],
\end{equation}
\begin{equation}\label{eq4.62}
D^2 w_a(x) = a \left[ D^2 h_a(x) + \frac{1}{a} D^2\phi^*(x) \right],
\end{equation}
where $E_n$ is the $n\times n$ identity matrix.

Our goal is to prove that under the assumptions of Theorem \ref{thm1.2}, the function $w_a(x)$ is an admissible function for equation \eqref{eq1.1}. Furthermore, under the growth conditions on $f(x, z, p),$  $w_a(x)$ serves as an admissible subsolution to the Dirichlet problem \eqref{eq1.1}, \eqref{eq1.2}.

\subsection{Admissibility of $w_a(x)$ in $\Omega \setminus \Omega_{\delta_1(a)}$}

In the interior region $\Omega \setminus \Omega_{\delta_1(a)} = \Omega \setminus \Omega_{1/a^{\gamma_1}},$  we have:
$v_a(x) = -\frac{1}{5} s_a,$  $\tilde{g}'_{s_a}(v_a(x)) = 0,$  $\zeta_a(x) = 1,$  and $\phi^*(x) = 0.$ 
Hence:
\begin{equation}\label{eq4.63}
w_a(x) = a \left[ -\frac{1}{5} s_a + \frac{1}{2} c_a |x - x^*|^2 \right],
\end{equation}
\begin{equation}\label{eq4.64}
D w_a(x) = a c_a (x - x^*) = a^{1 - \gamma_4} (x - x^*),
\end{equation}
\begin{equation}\label{eq4.65}
D^2 w_a(x) = a^{1 - \gamma_4} E_n.
\end{equation}

Therefore, the augmented Hessian matrix satisfies:
\begin{align}\label{eq4.66}
\omega(w_a)(x) &= a \left[ c_a E_n - \frac{1}{a} A(x, w_a(x), D w_a(x)) \right] \nonumber \\
&= a \left[ c_a E_n - \frac{1}{a} A_1(x, w_a, D w_a) - \frac{1}{a} A_2(x, w_a, D w_a) \right] \nonumber \\
&=: a \omega_3(x) - A_2(x, w_a, D w_a),
\end{align}
where
\begin{equation}\label{eq4.67}
\omega_3(x) = c_a E_n + \alpha_3(x),
\end{equation}
\begin{equation}\label{eq4.68}
\alpha_3(x) = -\frac{1}{a} A_1(x, w_a(x), D w_a(x)).
\end{equation}

Since $s_a > \frac{1}{a^{\gamma_3}} \alpha_1(a),$  $c_a = \frac{1}{a^{\gamma_4}},$  and $0 < \gamma_3 < \gamma_4,$  from \eqref{eq4.63} and \eqref{eq4.64} there exists $a_3 > a_2$ such that for all $a \geq a_3:$ 
\begin{equation}\label{eq4.69}
-\frac{1}{4} a^{1 - \gamma_3} < w_a(x) \leq -\frac{1}{6} a^{1 - \gamma_3} < 0, \quad x \in \Omega \setminus \Omega_{\delta_1(a)},
\end{equation}
\begin{equation}\label{eq4.70}
|D w_a(x)| \leq d a^{1 - \gamma_4}, \quad 1 \leq d a^{1 - \gamma_4},
\end{equation}
where $d = d(\Omega)$ is the diameter of $\Omega.$ 

Since $A_1(x, z, p)$ has growth order $(C_0,\chi, \gamma)$ in $\Omega \setminus \Omega_{\delta_1(a)}$ for $z \leq 0$ with $0 < \chi< 1, 0 < \gamma \leq 1,$  applying \eqref{eq4.69} and \eqref{eq4.70} yields for $x \in \Omega \setminus \Omega_{\delta_1(a)}:$ 
\begin{align*}
|\alpha_3(x)| &= \left| \frac{1}{a} A_1(x, w_a(x), D w_a(x)) \right| \\
&\leq \frac{2 C_0}{a} (1 + |w_a(x)|)^{-\chi} (1 + |D w_a(x)|)^\gamma \\
&\leq \frac{2 C_0}{a} \left( \frac{1}{6} a^{1 - \gamma_3} \right)^{-\chi} (1 + d a^{1 - \gamma_4})^\gamma \\
&\leq C a^{- \chi(1 - \gamma_3) + (\gamma-1)(1 - \gamma_4)}\frac{1}{a^{\gamma_4}}.
\end{align*}
Since $0 < \chi< 1,$  $1 - \gamma_3 > 0,\gamma\leq 1,$  we have $- \chi(1 - \gamma_3) +(\gamma-1)(1-\gamma_4)< 0$ and there exists $a_4 > a_3$ such that for all $a \geq a_4:$ 
\[
|\alpha_3(x)| \leq C a^{-1 - \chi(1 - \gamma_3)} \leq \frac{1}{3} a^{-\gamma_4}.
\]
From \eqref{eq4.67}, it follows that $\omega_3(x) > 0$ and thus $S_k(\omega_3(x)) > 0$ in $\Omega \setminus \Omega_{\delta_1(a)}.$  Since $A_2(x, w_a, D w_a) \leq 0,$  by assertion 1 of Proposition \ref{prop2.3}, we obtain $S_k(a \omega_3(x) - A_2(x, w_a, D w_a)) > 0.$  Hence, $w_a(x)$ is admissible in $\Omega \setminus \Omega_{\delta_1(a)}.$ 

\subsection{Matrix Decomposition of $\omega(w_a)(x)$ in the Boundary Neighborhood $\Omega_{\delta_1(a)}$}

For $x \in \Omega_{\delta_1(a)} = \Omega_{1/a^{\gamma_1}},$  let $x_0 \in \partial\Omega$ be the unique nearest boundary point with $|x - x_0| = d(x).$  Using \eqref{eq4.59}--\eqref{eq4.62}, we decompose $\omega(w_a)(x)$ as follows:
\begin{align*}
\omega(w_a)(x) =& D^2 w_a(x) - A(x, w_a(x), D w_a(x)) \nonumber \\
=&a\Bigg[\tilde{g}_{s_a}''(v_a(x))Dv_a(x)\otimes Dv_a(x)+g_{s_a}'(v_a(x))D^2v_a(x)+c_aE_n\zeta_a(x)\nonumber \\
&\hspace{1cm}+2c_a(x-x^*)\otimes D\zeta_a(x)+\frac 1 2 c_a|x-x^*|^2 D^2\zeta_a(x)+\frac{1}{a}D^2\varphi^*(x)\nonumber \\
&\hspace{1cm}-\frac{1}{a}A_1(x,w_a,Dw_a)-\frac{1}{a}A_2(x,w_a,Dw_a)\Bigg]\nonumber \\
\end{align*}
\begin{align}
=&a\Bigg[{g}_{s_a}'(v_a(x))\Big[D^2v_a(x_0)-\sum_{j=1}^n D_{p_j}A_1(x_0,0,0)D_{x_j}v_a(x_0)\Big]\nonumber \\
&\hspace{1cm}+g_{s_a}'(v_a(x))\left[D^2v_a(x)-D^2v_a(x)(x_0)\right]\nonumber \\
&\hspace{1cm}+g_{s_a}''(v_a(x))Dv_a(x)\otimes Dv_a(x)+c_aE_n\zeta_a(x)+2c_a(x-x^*)\otimes D\zeta_a(x)\nonumber \\
&\hspace{1cm}+\frac 1 2 c_a|x-x^*|^2 D^2\zeta_a(x)+\frac{1}{a}D^2\zeta_a(x)
+\frac 1 a D^2\varphi^*(x)\nonumber \\
&\hspace{1cm}-\frac 1 a A_2(x,w_a,Dw_a)-\frac 1 a A_1(x,0,0)-\nonumber \\
&\Big[\frac{1}{a}\big(A_1(x,w_a,Dw_a)-A_1(x,0,0)\big)-g_{s_a}'(v_a(x))\sum_{j=1}^nD_{p_j}A_1(x_0,0,0)D_{x_j}v_a(x_0)\Big]\Bigg]\nonumber \\
=&: a \Big[ g'_{s_a}(v_a(x)) \omega_4(x_0) + g'_{s_a}(v_a(x)) \alpha_4(x) + \alpha_5(x) + \omega_5(x) \nonumber \\
&\hspace{1cm} + \alpha_6(x) + \alpha_7(x) + \alpha_8(x) + \alpha_9(x) + \alpha_{10}(x) + B_1(x) \Big].\label{eq4.71}
\end{align}
Applying formula \eqref{eq2.1} to $B_1(x),$  we have:
\begin{align}
B_1(x) =&-\Bigg[\frac 1 a\int_0^1D_zA_1\big(x,tw_a(x),tDw_a(x)\big)w_a(x)dt\nonumber \\
&+\frac 1 a\int_0^1\sum_{j=1}^{n}D_{p_j}A_1\big(x,tw_a(x),tDw_a(x)\big)a\Big[C_a(x_j-x_j^*)\zeta_a(x)\nonumber \\
&\hspace{4cm}+\frac 1 2C_a|x-x^*|^2D_{x_j}\zeta_a(x)+\frac 1 a D_{x_j}\varphi^*(x)\Big]dt\nonumber \\
&+g'_{s_a}(v_a(x))\int_0^1\sum_{j=1}^{n}D_{p_j}A_1\big(x,tw_a(x),tDw_a(x)\big)\big(D_{x_j}v_a(x)-Dv_a(x_0)\big)dt\nonumber \\
&+g'_{s_a}(v_a(x))\int_0^1\sum_{j=1}^{n}\Big(D_{p_j}A_1\big(x,tw_a(x),tDw_a(x)\big)-D_{p_j}A_1(x_0,0,0)Dv_a(x_0)\Big)dt\Bigg]\nonumber\\
=:& \alpha_{11}(x) + \alpha_{12}(x) + g'_{s_a}(v_a(x)) \alpha_{13}(x)+ g'_{s_a}(v_a(x)) B_2(x). \label{eq4.72}
\end{align}
Applying \eqref{eq2.1} again to $B_2(x)$ yields:
\begin{align}
B_2(x) =&-\int_0^1\int_0^1\sum_{i,j=1}^nD^2_{x_ip_j}A_1\big((1-s)x_0+sx,tsw_a(x),tsDw_a(x)\big)(x_i-x_{0i})D_{x_j}v_a(x_0)dtds\nonumber \\
&-\int_0^1\int_0^1\sum_{j=1}^nD^2_{zp_j}A_1\big((1-s)x_0+sx,tsw_a(x),tsDw_a(x)\big)tw_a(x)D_{x_j}v_a(x_0)dtds\nonumber \\
&-\int_0^1\int_0^1\sum_{i,j=1}^nD^2_{p_ip_j}A_1\big((1-s)x_0+sx,tsw_a(x),tsDw_a(x)\big)a\Big[g'_{s_a}(v_a(x))D_{x_i}v_a(x)\nonumber \\
&\hspace{2cm}+C_a(x_i-x_{i}^*)\zeta_a(x)+\frac 1 2 C_a|x-x^*|^2D_{x_i}\zeta_a(x)+\frac 1 a D_{x_i}\varphi^*(x)\Big]dtds\nonumber \\
=:& \alpha_{14}(x) + \alpha_{15}(x) + \alpha_{16}(x).\label{eq4.73}
\end{align}
Combining \eqref{eq4.71}--\eqref{eq4.73}, we obtain the complete representation of $\omega(w_a)(x)$ in $\Omega_{\delta_1(a)}:$ 
\begin{align}\label{eq4.74}
\omega(w_a)(x) &= a \Big[ g'_{s_a}(v_a(x)) \left[\omega_4(x_0) + \alpha_4(x) + \alpha_{13}(x) + \alpha_{14}(x) + \alpha_{15}(x) + \alpha_{16}(x)\right] \nonumber \\
&\hspace{1cm} + \left[\alpha_6(x) + \alpha_7(x) + \alpha_8(x) + \alpha_{10}(x) + \alpha_{11}(x) + \alpha_{12}(x)\right] \nonumber \\
&\hspace{1cm} + \left[\omega_5(x) + \alpha_5(x) + \alpha_9(x)\right] \Big],
\end{align}
where $\omega_5(x) \geq 0, \alpha_5(x) \geq 0, \alpha_9(x) \geq 0$ for all $x \in \Omega_{\delta_1(a)}.$ 

\subsection{Key Lemmas and Norm Estimates at Boundary Neighborhood}

The two leading matrices in expression \eqref{eq4.74} are $g'_{s_a}(v_a(x)) \omega_4(x_0)$ and $\omega_5(x).$  The following lemma provides that $\lambda(\omega_4(x_0))\in \Gamma_k, x_0\in \partial\Omega$ and some other properties. 

\begin{lemma}\label{lem4.1}
Let $v_a(x)$ be defined by \eqref{eq4.10}. For $x_0 \in \partial\Omega,$  set:
\begin{equation}\label{eq4.75}
\omega_4(v_a)(x_0) = D^2 v_a(x_0) - \sum_{j=1}^n D_{p_j} A_1(x_0, 0, 0) D_{x_j} v_a(x_0).
\end{equation}
Suppose the domain $\Omega$ is uniformly $(k-1)$-$A_1$-convex. Then there exist $a_5 > a_4$ and constants $C_{12} > 0,$  $C_{13} > 0$ such that for all $a \geq a_5,$  $1 \leq m < k,$  and $x_0 \in \partial\Omega:$ 
\begin{equation}\label{eq4.76'}
S_k(\omega_4(v_a)(x_0)) \geq C_{12} |D v_a(x_0)|^{k}>0,
\end{equation}
\begin{equation}\label{eq4.77}
S_m(\omega_4(v_a)(x_0)) \leq C_{13} |D v_a(x_0)|^{m-k} S_k(\omega_4(v_a)(x_0)).
\end{equation}
\end{lemma}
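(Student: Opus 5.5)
The plan is to compute $\omega_4(v_a)(x_0)$ explicitly in the principal coordinate system at $x_0$, exactly as was done for $\tilde\omega_2$ in the proof of Theorem \ref{thm1.1}. The structural fact we want to expose is that the $(n-1)\times(n-1)$ tangential block equals $|Dv_a(x_0)|\,M(x_0,A_1)$. The normal entry carries an extra factor $t_a|Dv_a(x_0)|\to\infty$, which dominates everything else.

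\textbf{Step 1: the matrix in principal coordinates.} Set $\mu:=|Dv_a(x_0)|=a^{-\gamma_2}$, using \eqref{eq4.13}. By \eqref{eq4.14}, \eqref{eq4.6} and \eqref{eq4.7} at $d=0$ we get
\[
J(x_0)^T D^2 v_a(x_0) J(x_0)=\mu\,\operatorname{diag}(\kappa_1(x_0),\dots,\kappa_{n-1}(x_0),\,t_a\mu).
\]
Since $Dv_a(x_0)$ is $\mu$ times a unit normal, the term $-\sum_j D_{p_j}A_1(x_0,0,0)D_{x_j}v_a(x_0)$, after conjugation by $J(x_0)$, equals $\mu\,J^T\big[\sum_j D_{p_j}A_1(x_0,0,0)\nu_j\big]J$, with the sign convention of \eqref{eq1.4}. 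Hence
\[
J(x_0)^T\omega_4(v_a)(x_0)J(x_0)=\mu\begin{bmatrix} M(x_0,A_1) & b(x_0)\\ b(x_0)^T & t_a\mu+c(x_0)\end{bmatrix}.
\]
Here $b$ and $c$ are built only from $D_pA_1(x_0,0,0)$ and $J(x_0)$, so they are bounded on $\partial\Omega$ independently of $a$. Likewise $M(x_0,A_1)$ is bounded, because $\partial\Omega\in C^2$ and $A_1\in C^2$.

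\textbf{Step 2: expansion of $S_j$ in the last diagonal entry.} For each $1\le j\le k$, use \eqref{eq2.2_sk} and expand in the $(n,n)$ entry, as in \eqref{eq3.expansion}:
\[
S_j(\omega_4)=\mu^j\big[(t_a\mu+c)S_{j-1}(M)+S_j(M)+P_j\big].
\]
Here $P_j$ collects the principal minors that contain the last index through the off-diagonal entries $b$. Each such minor is a polynomial in the entries of $M$ and $b$, so $|P_j|+|S_j(M)|+|c\,S_{j-1}(M)|\le C$ uniformly in $x_0$ and $a$. By hypothesis, $\lambda(M(x_0,A_1))\in\Gamma_{k-1}\subset\Gamma_{j-1}$ and $S_{k-1}(M)\ge C_1$. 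By continuity and compactness of $\partial\Omega$, also $\inf_{x_0}S_{j-1}(M)\ge c_0>0$ for all $1\le j\le k$. Since $\gamma_5>\gamma_2$ by \eqref{eq4.1}, we have $t_a\mu=a^{\gamma_5-\gamma_2}\to\infty$. Choose $a_5>a_4$ so that $t_a\mu\,c_0\ge 2C$. Then
\[
S_j(\omega_4)\ge \tfrac12\mu^j t_a\mu\,S_{j-1}(M)>0\quad\text{for all }j\le k.
\]
This gives $\lambda(\omega_4)\in\Gamma_k$. Taking $j=k$ gives \eqref{eq4.76'} with, for instance, $C_{12}=c_0/2$, using $t_a\mu\ge1$.

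\textbf{Step 3: the ratio bound.} For $1\le m<k$, the same expansion gives the upper bound $S_m(\omega_4)\le\mu^m\big(t_a\mu\,C'+C\big)\le 2C'\mu^m t_a\mu$. Dividing by the lower bound for $S_k$ from Step 2 yields
\[
\frac{S_m(\omega_4)}{S_k(\omega_4)}\le \frac{2C'\mu^m t_a\mu}{\tfrac12 C_1\mu^k t_a\mu}=C_{13}\,\mu^{m-k}.
\]
The factor $t_a\mu$ cancels, and this is exactly \eqref{eq4.77}.

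The main obstacle is Step 2: we must control the mixed terms $P_j$, which involve the off-diagonal column $b$, uniformly in $x_0$ and $a$. We also need positivity of every $S_j$ with $j\le k$, not only of $S_k$. I would handle this by writing each principal minor containing index $n$ via cofactor expansion along the last row and column. The only unbounded contribution is $t_a\mu$ times a principal minor of $M$; every remaining term is a bounded polynomial in $M$, $b$, $c$. This is routine but needs care with the sign convention relating $Dv_a(x_0)$ and $\nu(x_0)$ in \eqref{eq4.13}, so that the tangential block matches $+M(x_0,A_1)$ as defined in \eqref{eq1.4}.
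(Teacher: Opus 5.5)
Your Steps 1--3 follow the paper's proof of Lemma~\ref{lem4.1} step for step: the block form \eqref{eq4.81}--\eqref{eq4.82} in principal coordinates, the expansion of $S_j$ in the $(n,n)$ entry as in \eqref{eq4.83}--\eqref{eq4.84}, domination by $t_a\mu=a^{\gamma_5-\gamma_2}\to\infty$, and division of the two-sided bounds \eqref{eq4.86}--\eqref{eq4.87}. Steps 2 and 3 are correct and slightly cleaner than the paper. You derive $\lambda(\omega_4(v_a)(x_0))\in\Gamma_k$ directly from $S_j(\omega_4)>0$ for every $j\le k$, which is the right justification. The paper's closing appeal to \eqref{eq4.74} and the connectedness of $\partial\Omega$ is beside the point.

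The step that fails as written is the sign in Step 1: you settled the sign question you flagged in the wrong direction. Formula \eqref{eq4.13} is misprinted. By \eqref{eq4.9}, $Dd(x_0)=-\nu(x_0)$, so $Dv_a(x_0)=-a^{-\gamma_2}Dd(x_0)=+a^{-\gamma_2}\nu(x_0)$. This must hold anyway, since $v_a=0$ on $\partial\Omega$ and $v_a<0$ inside, and it is the sign the paper itself uses in \eqref{eq4.80}.

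Set $N(x_0)=\sum_j D_{p_j}A_1(x_0,0,0)\nu_j(x_0)$. The $A_1$-term in $\omega_4$ is then $-\mu N(x_0)$, so the tangential block of $J^T\omega_4 J$ is $\mu\bigl[\operatorname{diag}(\kappa_1,\dots,\kappa_{n-1})-\tilde J^T N\tilde J\bigr]$. That is not $\mu M(x_0,A_1)$ with the plus sign of \eqref{eq1.4}.

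The paper makes the identical slip: it writes $M(x_0,A_1)$ in \eqref{eq4.82} right after \eqref{eq4.80}, and does the same in \eqref{eq3.omega2_tilde}. So your proof is exactly as valid as the paper's. Both establish the lemma when uniform $(k-1)$-$A_1$-convexity is defined through $\operatorname{diag}(\kappa)-\tilde J^T N\tilde J$.

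Read literally with \eqref{eq1.4}, the statement is false. Take the unit ball, $k=2$ and $A_1=3\langle x,p\rangle E_n$. Then $N=3E_n$ and $M(x_0,A_1)=4E_{n-1}$, yet $J^T\omega_4 J=\mu\operatorname{diag}(-2,\dots,-2,t_a\mu-3)$. Its $S_2$ equals $2\mu^2(n-1)(n+1-t_a\mu)$, which is negative once $t_a\mu>n+1$.

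So do not match signs through \eqref{eq4.13}. State the convexity hypothesis with the minus sign instead; Steps 2 and 3 then go through unchanged.
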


\begin{proof}
For $x_0 \in \partial\Omega,$  from \eqref{eq4.12}, \eqref{eq4.13}, \eqref{eq4.6}, and \eqref{eq4.7}, we have:
\begin{equation}\label{eq4.78}
D v_a(x_0) = - D\theta_a(x_0) = -\frac{1}{a^{\gamma_2}} D d(x_0) = -\frac{1}{a^{\gamma_2}} e'_n J(x_0)^T,
\end{equation}
\begin{align}\label{eq4.79}
D^2 v_a(x_0) &= t_a D\theta_a(x_0) \otimes D\theta_a(x_0) - D^2\theta_a(x_0) \nonumber \\
&= J(x_0) \left[ \frac{t_a}{a^{2\gamma_2}} e'_n \otimes e'_n - \frac{1}{a^{\gamma_2}} \operatorname{diag}(-\kappa_1(x_0), \dots, -\kappa_{n-1}(x_0), 0) \right] J(x_0)^T \nonumber \\
&= \frac{1}{a^{\gamma_2}} J(x_0) \operatorname{diag}\left(\kappa_1(x_0), \dots, \kappa_{n-1}(x_0), \frac{t_a}{a^{\gamma_2}}\right) J(x_0)^T.
\end{align}
On the other hand:
\begin{equation}\label{eq4.80}
\sum_{j=1}^n D_{p_j} A_1(x_0, 0, 0) D_{x_j} v_a(x_0) = |D v_a(x_0)| \sum_{j=1}^n D_{p_j} A_1(x_0, 0, 0) \nu_j(x_0).
\end{equation}
Substituting \eqref{eq4.79} and \eqref{eq4.80} into \eqref{eq4.75} yields:
\begin{align}\label{eq4.81}
\omega_4(v_a)(x_0) &= \frac{1}{a^{\gamma_2}} J(x_0) \tilde{\omega}_4(x_0) J(x_0)^T,
\end{align}
where
\begin{equation}\label{eq4.82}
\tilde{\omega}_4(x_0) = \begin{bmatrix} M(x_0, A_1) & \begin{matrix} \beta_1 \\ \vdots \\ \beta_{n-1} \end{matrix} \\[1em] \begin{matrix} \beta_1 & \dots & \beta_{n-1} \end{matrix} & \frac{t_a}{a^{\gamma_2}} + \beta_n \end{bmatrix},
\end{equation}
and $\beta_1, \dots, \beta_{n-1}, \beta_n$ are quantities uniformly bounded with respect to $x_0 \in \partial\Omega.$ 

From \eqref{eq4.81} and \eqref{eq4.82} we have 
\begin{equation}\label{eq4.83}
S_k(\omega_4(v_a)(x_0)) = S_k(\tilde{\omega}_4(x_0)) = \left(\frac{1}{a^{\gamma_2}}\right)^k \left[ S_{k-1}(M(x_0, A_1)) \left(\frac{t_a}{a^{\gamma_2}} + \beta_n\right) + \tilde{P}_k \right],
\end{equation}
\begin{equation}\label{eq4.84}
S_m(\omega_4(v_a)(x_0)) = S_m(\tilde{\omega}_4(x_0)) = \left(\frac{1}{a^{\gamma_2}}\right)^m \left[ S_{m-1}(M(x_0, A_1)) \left(\frac{t_a}{a^{\gamma_2}} + \beta_m\right) + \tilde{P}_m \right],
\end{equation}
for $m = 1, \dots, k-1,$  where $\tilde{P}_1, \dots, \tilde{P}_k$ are uniformly bounded terms.

Since $t_a = a^{\gamma_5}$ with $\gamma_5 > \gamma_2$ and $\Omega$ is uniformly $(k-1)$-$A_1$-convex, there exist constants $C^{(m)}_{15} \geq C^{(m)}_{14} > 0$ such that for all $a \geq a_5:$ 
\begin{equation}\label{eq4.85}
0 < C^{(m)}_{14} \leq S_{m-1}(M(x_0, A_1)) \leq C^{(m)}_{15}, \quad m = 1, \dots, k.
\end{equation}
Since $\frac{t_a}{a^{\gamma_2}} = a^{\gamma_5 - \gamma_2} \to +\infty$ as $a \to \infty,$  from \eqref{eq4.83}--\eqref{eq4.85} there exist constants $C_{17} > C_{16} > 0$ such that for all $a \geq a_5:$ 
\begin{equation}\label{eq4.86}
\left(\frac{1}{a^{\gamma_2}}\right)^k C_{16} \left(\frac{t_a}{a^{\gamma_2}}\right) < S_k(\omega_4(v_a)(x_0)) < C_{17} \left(\frac{t_a}{a^{\gamma_2}}\right) \left(\frac{1}{a^{\gamma_2}}\right)^k,
\end{equation}
\begin{equation}\label{eq4.87}
\left(\frac{1}{a^{\gamma_2}}\right)^m C_{16} \left(\frac{t_a}{a^{\gamma_2}}\right) < S_m(\omega_4(v_a)(x_0)) < C_{17} \left(\frac{t_a}{a^{\gamma_2}}\right) \left(\frac{1}{a^{\gamma_2}}\right)^m, \quad m=1, \dots, k-1.
\end{equation}
Noting that $|Dv_a(x_0)| = \frac{1}{a^{\gamma_2}},$  dividing \eqref{eq4.86} by \eqref{eq4.87} yields the estimates \eqref{eq4.76'}--\eqref{eq4.77} with $C_{12} = \frac{C_{16}}{C_{17}}$ and $C_{13} = \frac{C_{17}}{C_{16}}.$ From \eqref{eq4.74} and the connectedness of $\partial\Omega$ it follows that $\lambda(\omega_4(v_a(x_0)))\in \Gamma_k, x_0\in \partial\Omega.$ 
\end{proof}

\section{Illustrative Examples}

In this section, we present concrete examples of symmetric augmented matrices $A(x, z, p),$  right-hand side functions $f(x, z, p),$  and domains $\Omega$ that satisfy all structural conditions $(\mathcal{C}_{\delta_0})$ of Theorem \ref{thm1.2}, thereby guaranteeing the existence of admissible subsolutions to the Dirichlet problem \eqref{eq1.1}--\eqref{eq1.2}.

In particular, Example \ref{ex5.2} demonstrates that Theorem \ref{thm1.2} applies even to domains that are \textbf{not} $(k-1)$-convex in the classical sense, highlighting the fundamental geometric significance of the augmented matrix term $A_1(x, z, p).$ 

\begin{example}\label{ex5.1}
Let $\Omega = \{ x \in \mathbb{R}^n: |x| < 1 \}$ be the unit ball in $\mathbb{R}^n.$  The boundary $\partial\Omega = \{ |x| = 1 \}$ is smooth ($C^\infty$) and its principal curvatures with respect to the unit inner normal vector $-\nu(x_0) = -x_0$ are $\kappa_1(x_0) = \dots = \kappa_{n-1}(x_0) = 1$ for all $x_0 \in \partial\Omega.$ 

Consider the augmented matrix decomposed as $A(x, z, p) = A_1(x, z, p) + A_2(x, z, p),$  where:
\begin{equation}\label{eq5.1}
A_1(x, z, p) = - \frac{(1 - z)^{-\chi}}{(1 + |x|^2)^\beta (1 + |p|^2)^{(1-\gamma)/2}} E_n \quad \text{for } z \leq 0,
\end{equation}
\begin{equation}\label{eq5.2}
A_2(x, z, p) = -\lambda_0 (-z)^{-\chi} (1 + |p|^2)^{(\gamma-2)/2} E_n \quad \text{for } z \leq 0,
\end{equation}
where $E_n$ denotes the $n \times n$ identity matrix, $\lambda_0 \geq 0$ is a non-negative constant, $0 < \chi< 1,$  $0 < \gamma \leq 1,$  and $\beta \geq 0.$ 

Let $f(x, z, p) = e^{-z} (1 + |p|^2)^{l/2}$ for $z \leq 0,$  where $0 \leq l < k.$  We verify that all structural hypotheses of Theorem \ref{thm1.2} are satisfied:
\begin{enumerate}
\item \textbf{Negative semi-definiteness of $A_2$}: Since $\lambda_0 \geq 0$ and $z \leq 0,$  we have $(-z)^{-\chi} \geq 0,$  which implies $A_2(x, z, p) \leq 0$ for all $z \leq 0.$  Thus condition (2) holds.
\item \textbf{Uniform $(k-1)$-$A_1$-convexity of $\Omega$}: At $x_0 \in \partial\Omega$ and $z=0, p=0,$  we have $A_1(x_0, 0, 0) = -\frac{1}{(1 + |x_0|^2)^\beta} E_n = -\frac{1}{2^\beta} E_n$ and $D_{p_j} A_1(x_0, 0, 0) = 0$ for all $j=1, \dots, n.$  Consequently, the characteristic boundary matrix defined in \eqref{eq1.4} reduces to:
\[
M(x_0, A_1) = \operatorname{diag}(\kappa_1(x_0), \dots, \kappa_{n-1}(x_0)) = \operatorname{diag}(1, \dots, 1) \in \mathbb{R}^{(n-1) \times (n-1)}.
\]
Therefore, $S_{k-1}(M(x_0, A_1)) = \sigma_{k-1}(1, \dots, 1) = \binom{n-1}{k-1} > 0.$  This confirms that $\Omega$ is uniformly $(k-1)$-$A_1$-convex, satisfying condition (3).
\item \textbf{Growth order of $A_1$ in $\bar{\Omega} \setminus \Omega_\delta$}: For $z \leq 0,$  we have $1 - z = 1 + |z|.$  Thus:
\[
|A_1(x, z, p)| \leq \frac{(1 + |z|)^{-\chi}}{(1 + |p|^2)^{(1-\gamma)/2}} \leq C_0 (1 + |z|)^{-\chi} (1 + |p|)^\gamma,
\]
which shows that $|A_1(x, z, p)| \frac{(1+|z|)^\chi }{(1+|p|)^\gamma} \leq C_0,$  satisfying condition (4) with growth order $(C_0,\chi, \gamma)$ for $0 < \chi< 1$ and $0 < \gamma \leq 1.$ 
\item \textbf{Derivative growth orders in $\bar{\Omega}_\delta$}:
\begin{enumerate}
\item Differentiating $A_1$ with respect to $z:$ 
\[
D_z A_1(x, z, 0) = \frac{\chi (1 - z)^{-\chi-1}}{(1 + |x|^2)^\beta} E_n \geq 0 \quad \text{for } z \leq 0,
\]
since $\chi  > 0$ and $1 - z \geq 1,$  satisfying condition (5.a).
\item Differentiating $A_1$ with respect to $p_j:$ 
\[
D_{p_j} A_1(x, z, p) = -\frac{(1 - z)^{-\chi}}{(1 + |x|^2)^\beta} \frac{-(1-\gamma) p_j}{(1 + |p|^2)^{(3-\gamma)/2}} E_n.
\]
Since $1 - z = 1 + |z|$ and $\frac{|p_j|}{(1 + |p|^2)^{(3-\gamma)/2}} \leq (1 + |p|)^{\gamma-1},$  we obtain:
\[
|D_p A_1(x, z, p)| \leq C (1 + |z|)^{-\chi} (1 + |p|)^{\gamma-1},
\]
satisfying condition (5.b) with growth order $(*,\chi, \gamma-1)$ for $0 < \chi< 1.$ 
\item Differentiating $A_1$ with respect to $x_i$ and $p_j:$ 
\[
D^2_{x_i p_j} A_1(x, z, p) = -\frac{-2\beta x_i (1 - z)^{-\chi}}{(1 + |x|^2)^{\beta+1}} \frac{-(1-\gamma) p_j}{(1 + |p|^2)^{(3-\gamma)/2}} E_n.
\]
We have:
\[
|D^2_{xp} A_1(x, z, p)| \leq C_1 (1 + |z|)^{-\chi} (1 + |p|)^{\gamma-1},
\]
satisfying condition (5.c) with growth order $(C_1,\chi, \gamma-1)$ for $0 < \chi< 1.$ 
\item Differentiating $A_1$ with respect to $z$ and $p_j:$ 
\[
D^2_{z p_j} A_1(x, z, p) = \frac{\chi (1 - z)^{-\chi-1}}{(1 + |x|^2)^\beta} \frac{-(1-\gamma) p_j}{(1 + |p|^2)^{(3-\gamma)/2}} E_n.
\]
For $z \leq 0,$  we have $(1 - z)^{-\chi-1} = (1 + |z|)^{-(\chi+1)}.$  Thus:
\[
|D^2_{zp} A_1(x, z, p)| \leq C (1 + |z|)^{-(\chi+1)} (1 + |p|)^{\gamma-1},
\]
satisfying condition (5.d) with growth order $(*,\chi+1, \gamma-1)$ precisely.
\end{enumerate}
\item \textbf{Growth order of $f$}: The function $f(x, z, p) = e^{-z} (1 + |p|^2)^{l/2}$ has growth order $(C_2, 0, l)$ with $0 \leq l < k,$  satisfying condition (6).
\end{enumerate}
By Theorem \ref{thm1.2}, there exists an admissible subsolution $u(x)$ to the Dirichlet problem \eqref{eq1.1}--\eqref{eq1.2}.
\end{example}

\begin{example}\label{ex5.2}
Let $n \geq 3$ and $2 \leq k \leq n.$  Consider the solid torus domain in $\mathbb{R}^n:$ 
\[
\Omega = \left\{ x = (x_1, \dots, x_n) \in \mathbb{R}^n: \left(\sqrt{x_1^2 + x_2^2} - R\right)^2 + x_3^2 + \dots + x_n^2 < r^2 \right\},
\]
where $0 < r < R$ are fixed radii.

The boundary $\partial\Omega = \left\{ \left(\sqrt{x_1^2 + x_2^2} - R\right)^2 + x_3^2 + \dots + x_n^2 = r^2 \right\} \cong S^1 \times S^{n-2}$ is a smooth, connected $(n-1)$-dimensional hypersurface in $\mathbb{R}^n.$ 

At any boundary point $x_0 \in \partial\Omega,$  let $u \in [0, 2\pi)$ denote the poloidal angle such that $\sqrt{x_{0,1}^2 + x_{0,2}^2} = R + r \cos u.$  The principal curvatures of $\partial\Omega$ with respect to the unit inner normal vector $-\nu(x_0)$ pointing into $\Omega$ are:
\[
\kappa_1(x_0) = \dots = \kappa_{n-2}(x_0) = \frac{1}{r} > 0, \quad \kappa_{n-1}(x_0) = \frac{\cos u}{R + r \cos u}.
\]
On the inner region of the torus where $\cos u < 0$ (facing the central hole), the principal curvature $\kappa_{n-1}(x_0) < 0.$  In particular, at the innermost equator ($\cos u = -1$), we have:
\[
\kappa_{n-1}(x_0) = -\frac{1}{R - r} < 0.
\]
Consequently, $S_{k-1}(\kappa_1, \dots, \kappa_{n-1})$ fails to be positive on the inner boundary region for $k \geq 2, k$ is even.  Thus, the domain $\Omega$ is not $(k-1)$-convex in the classical sense of Caffarelli--Nirenberg--Spruck \cite{3}.

Now, let $\Phi(x) = \left(\sqrt{x_1^2 + x_2^2} - R\right)^2 + x_3^2 + \dots + x_n^2 - r^2,$  so that $\Omega = \{ \Phi(x) < 0 \}$ and $D\Phi(x_0) = 2r \nu(x_0)$ on $\partial\Omega,$  where $\nu(x_0)$ is the unit outward normal vector. Set $V(x) = \frac{\mu_0}{2r} D\Phi(x) \in C^2(\bar{\Omega}, \mathbb{R}^n),$  where $\mu_0 > 0$ is a positive constant, so that $V(x_0) = \mu_0 \nu(x_0)$ on $\partial\Omega.$ 

Consider the augmented matrix $A(x, z, p) = A_1(x, z, p) + A_2(x, z, p)$ defined for $z \leq 0$ by:
\begin{equation}\label{eq5.3}
A_1(x, z, p) = \frac{\langle V(x), p \rangle}{(1 + |x|^2)^\beta (1 + |p|^2)^{(1-\gamma)/2} (1 - z)^\chi } E_n,
\end{equation}
\begin{equation}\label{eq5.4}
A_2(x, z, p) = -\lambda_0 (-z)^{-\chi} (1 + |p|^2)^{(\gamma-2)/2} E_n,
\end{equation}
where $\mu_0 > 0, \lambda_0 \geq 0, 0 < \chi< 1, 0 < \gamma \leq 1, \beta \geq 0.$ 

At $x_0 \in \partial\Omega$ and $z=0, p=0,$  we have $A_1(x_0, 0, 0) = 0$ and $D_{p_d} A_1(x_0, 0, 0) = \frac{\mu_0 \nu_d(x_0)}{(1 + |x_0|^2)^\beta} E_n.$  Consequently:
\[
\sum_{d=1}^n D_{p_d} A_1(x_0, 0, 0) \nu_d(x_0) = \frac{\mu_0 \sum_{d=1}^n \nu_d^2(x_0)}{(1 + |x_0|^2)^\beta} E_n = \frac{\mu_0}{(1 + |x_0|^2)^\beta} E_n.
\]
The characteristic boundary matrix $M(x_0, A_1)$ defined in \eqref{eq1.4} becomes:
\[
M(x_0, A_1) = \operatorname{diag}\left( \kappa_1(x_0) + \frac{\mu_0}{(1 + |x_0|^2)^\beta}, \dots, \kappa_{n-1}(x_0) + \frac{\mu_0}{(1 + |x_0|^2)^\beta} \right).
\]
Since $|x_0| \leq R + r$ for all $x_0 \in \partial\Omega,$  we have $(1 + |x_0|^2)^\beta \leq (1 + (R+r)^2)^\beta =: C_R.$ 
By choosing $\mu_0 > \frac{C_R}{R - r}$ (for example, $\mu_0 = \frac{2 C_R}{R - r}$), the smallest eigenvalue of $M(x_0, A_1)$ satisfies:
\[
\lambda_{n-1}(M(x_0, A_1)) = \kappa_{n-1}(x_0) + \frac{\mu_0}{(1 + |x_0|^2)^\beta} \geq -\frac{1}{R - r} + \frac{\mu_0}{C_R} > 0.
\]
Thus, ALL eigenvalues of $M(x_0, A_1)$ are strictly positive at every point $x_0 \in \partial\Omega.$  Therefore, $\lambda(M(x_0, A_1)) \in \Gamma_{n-1} \subset \Gamma_{k-1},$  and $S_{k-1}(M(x_0, A_1)) \geq C_1 > 0$ on $\partial\Omega.$ 

This proves that the solid torus $\Omega$ is uniformly $(k-1)$-$A_1$-convex, even though its boundary $\partial\Omega$ is connected and $\Omega$ is not $(k-1)$-convex in the classical sense. All growth conditions (4)--(6) are verified similarly to Example \ref{ex5.1}. Hence, Theorem \ref{thm1.2} guarantees the existence of an admissible subsolution $u(x).$ 
\end{example}

\begin{example}\label{ex5.3}
Let $\Omega = \{ x \in \mathbb{R}^n: \sum_{j=1}^n \mu_j^2 x_j^2 < 1 \}$ ($\mu_j > 0$) be an ellipsoid in $\mathbb{R}^n.$ 
Let $A(x, z, p) = A_1(x, z, p) + A_2(x, z, p)$ with:
\[
A_1(x, z, p) = \frac{g(x) (1-z)^{-\chi}}{(1 + |p|^2)^\theta} P(x) \quad \text{for } z \leq 0,
\]
\[
A_2(x, z, p) = -c_0 (1 + |p|^2)^{(\gamma-2)/2} (1-z)^{-\chi} E_n \quad \text{for } z \leq 0,
\]
where $g(x) \geq 0$ is a smooth non-negative function, $P(x)$ is a smooth positive semi-definite matrix field on $\bar{\Omega},$  $c_0 \geq 0,$  $\theta = (1-\gamma)/2 \geq 0,$  $0 < \chi< 1,$  and $0 < \gamma \leq 1.$ 

When the norm $\|P\|_{C^2(\bar{\Omega})}$ is sufficiently small in the boundary neighborhood $\Omega_\delta,$  the perturbed curvature matrix $M(x_0, A_1)$ remains in the Garding cone $\Gamma_{k-1}$ for all $x_0 \in \partial\Omega.$  Thus, the domain $\Omega$ is uniformly $(k-1)$-$A_1$-convex, and Theorem \ref{thm1.2} guarantees the existence of an admissible subsolution.
\end{example}

\end{document}